\documentclass[12pt]{article}
\usepackage{latexsym,amsmath,theorem,graphicx,amssymb,a4wide}
\usepackage[utf8]{inputenc}
\usepackage{hyperref}

\numberwithin{equation}{section}

\newtheorem{teo}{Theorem}[section]
\newtheorem{lemma}[teo]{Lemma}
\newtheorem{pro}[teo]{Proposition}

{\theorembodyfont{\rmfamily}\newtheorem{re}[teo]{Remark}}
{\theorembodyfont{\rmfamily}}

\DeclareMathOperator{\EE}{\mathbb{E}}
\DeclareMathOperator{\PP}{\mathbb{P}}

\newcommand{\ind}{\mathbf{1}}

\newcommand{\real}{\mathbb{R}}

\newcommand{\integer}{\mathbb{Z}}
\renewcommand{\natural}{\mathbb{N}}

\newcommand{\iid}{\emph{i.i.d. }}
\newcommand{\eg}{\emph{e.g. }}

\renewcommand{\epsilon}{\varepsilon}

 \title{\bf Mixing time for the Repeated Balls into Bins dynamics} 

\author{Nicoletta Cancrini\footnote{nicoletta.cancrini@univaq.it, DIIIE Università dell'Aquila, L'Aquila Italy.}
  \ and Gustavo Posta\footnote{gustavo.posta@uniroma1.it, Dipartimento di Matematica ``G. Castelnuovo'', Università di Roma ``la Sapienza'', Roma Italy.}
}

\begin{document}

\date{}

\noindent

\maketitle
\thispagestyle{empty}

\begin{abstract}
  We consider a nonreversible finite Markov chain called Repeated Balls-into-Bins (RBB) process.
  This process is a discrete time conservative interacting particle system with parallel updates.
  Place initially in $L$ bins $rL$ balls, where $r$ is a fixed positive constant.
  At each time step a ball is removed from each non-empty bin.
  Then \emph{all} these removed balls are uniformly reassigned into bins.
  We prove that the mixing time of the RBB process is of order $L$.
  Furthermore we show that if the initial configuration has $o(L)$ balls per site the equilibrium is attained in $o(L)$ steps.
\end{abstract}

\section{Introduction}
\label{sec:intro}

Consider $L\in\natural$ bins where $rL\in\natural$ balls are initially placed.
At each discrete time step a ball is taken from each non-empty bin and all the balls are uniformly reassigned into bins.
The occupation numbers of balls in bins is an ergodic discrete time finite Markov chain, called the \emph{Repeated Balls-into-Bins process}.

The RBB process arises naturally in different contexts. 
For example the balls in every bin can model customers in a queue, which are served at discrete times.
Each served customer is then reassigned to a random queue.
In this setting the RBB process is a discrete time \emph{closed Jackson network} \cite{Ja,Ke}.
In the algorithmic context the balls are tasks (or \emph{tokens}) in a network of parallel CPU which are reassigned at every round.
See for example \cite{Be:Cl:Na:Pa:Po} for a deeper discussion.

In \cite{Ca:Po} and \cite{Ca:Po:1} we proved the propagation of chaos of the RBB process and studied some equilibrium properties of the limiting nonlinear process.
This system is a conservative interacting particle system in discrete time with \emph{parallel updates}.
We will thus call the balls \emph{particles} and the bins \emph{sites}.
We can think of the RBB process as a zero-range process \cite{Sp} on the complete graph with constant jump rates and parallel updates.
However, because of the parallel updating, it is not reversible.
For this reason its invariant measure is still unknown and the standard techniques to study the convergence rate to equilibrium cannot be used.
The main result of this paper is Theorem~\ref{teo:mai} which implies that the mixing time of the RBB process is of order $L$.
This estimate is needed to generate approximate samples of the invariant measure of the RBB process and  calculate the interesting statistical quantities of the process at equilibrium.
The mixing time of systems with parallel updating rules has been studied for \emph{Probabilistic Cellular Automata}, see for example \cite{Ji:Le} and \cite{Lo}.
However PCA models are reversible, non conservative, and their invariant measure is usually known.

More precisely Theorem~\ref{teo:mai} states that if the system starts in a configuration with $o(L)$ particles per site the equilibrium is attained after $o(L)$ steps, while if the system starts in a configuration with $O(L)$ particles per site the equilibrium is attained after $O(L)$ steps.
This suggests that, as in the zero-range process with constant jump rates (see \cite{He:Sa}), the system separates into a slowly evolving phase of sites with $O(L)$ particles and a quickly evolving phase of sites with $o(L)$ particles.
Thus the slowly evolving phase dissolves into the quickly evolving phase and the mixing time is essentially the time in which it is completely dissolved. 

To prove Theorem~\ref{teo:mai} we use the path coupling technique of Bubley and Dyer \cite{Bu:Dy}.
This technique has been successfully applied in \cite{He:Sa} to estimate the mixing time of the mean field zero range process, which is similar to the RBB process but has \emph{sequential} updating and a known reversible stationary measure.
A remarkable feature of the method used in \cite{He:Sa} is that it does not require reversibility or explicit knowledge of the invariant measure of the model.
We use the same approach as \cite{He:Sa} although the parallel updating of the RBB process requires new ideas.

We briefly outline the strategy of the proof of Theorem~\ref{teo:mai}.

We show that, after a thermalization time depending linearly on the maximum occupation number of the initial state, the following happens.
First the distribution of the site occupation number decays exponentially.
Second there is a coupling such that the distributions of two copies of the RBB process started from two different configurations are close in total variation distance.
In particular we show that this distance can be estimated in terms of the coalescing time of two RBB processes starting from configurations which differ only for one particle.

The paper is organized as follows.
In Section~\ref{sec:cbp} we state the notations and Theorem~\ref{teo:mai}, in Section~\ref{sec:pro} we give its proof.
Finally in the last two sections we prove the two lemmata on which the proof of the main result relies.

\section{Notations and main result}
\label{sec:cbp}

We denote by $\integer_+$ the set of non-negative integers, $\natural:=\integer_+\setminus\{0\}$ and for $L\in\natural$ the \emph{configuration space} $\Omega:=\integer_+^L$.
For any denumerable set $S$ its cardinality, finite or infinite, is denoted by $|S|$ and for any $n\in\natural$ we define $[n]:=\{1,\dots,n\}$.
Given $\eta\in\Omega$ let $\Vert\eta\Vert_\infty:=\max_{x\in[L]}\eta_x$.
If $\mu$ and $\nu$ are two probability measures we denote by $\Vert\mu-\nu\Vert$ their total variation distance.

To keep notation simple, in the following the term \emph{constant} means a number which may depend  only on $r$, where $rL$ is the fixed number of particles.

We introduce an \emph{explicit construction} for the RBB process which will be useful in the sequel.
If $t\in\natural$ define $U(t):=(U_1(t),\dots,U_L(t))$, where $U_1(t),\dots,U_L(t)$ are \iid random variables uniformly distributed on $[L]$ and such that $U(1),U(2),\dots$ are independent.  
The RBB process is a discrete time irreducible finite Markov chain $(\eta(t))_{t\geq0}$ with values in $\Omega$ and invariant measure $\nu$.
For any initial condition $\eta(0)=\eta\in\Omega$, $t\in\integer_+$ and $x\in[L]$ we define recursively
\begin{equation}
  \label{eq:gc}
  \eta_x(t+1)
  :=\eta_x(t)-\ind_{\{\eta_x(t)>0\}}+\sum_{y=1}^L\ind_{\{\eta_y(t)>0\}}\ind_{\{U_y(t+1)=x\}},
\end{equation}
and
\begin{equation}
  \label{eq:bw}
  B_x(t+1)
  :=\sum_{y=1}^L\ind_{\{\eta_y(t)>0\}}\ind_{\{U_y(t+1)=x\}},
  \qquad
  \bar w(t)
  :=\frac{1}{L}\sum_{x=1}^L\ind_{\{\eta_x(t)>0\}}.
\end{equation}
Note that, conditional on $\eta(t)$, the random vector $B(t+1):=(B_1(t+1),\dots,B_L(t+1))$
  has Maxwell-Boltzmann distribution (see for example \cite{Ca:Po:1} \S 3.2) with $L\bar w(t)$ particles and $L$ sites.
Equation \eqref{eq:gc} is equivalent to
\begin{equation}
  \label{eq:gcb}
  \eta_x(t+1)
  =\eta_x(t)-\ind_{\{\eta_x(t)>0\}}+B_x(t+1).
\end{equation}
To keep notation simple we use the standard convention (see \eg \cite{Li}) to denote the initial state of the processes with the same letter of the process, namely
\begin{equation*}
  \PP_\eta(\eta(t)=\xi):=\PP(\eta(t)=\xi\vert \eta(0)=\eta ),
\end{equation*}
for any $\eta,\xi\in\Omega$.
We can now state our main result.

\begin{teo}
  \label{teo:mai}
  Let $(\eta(t))_{t\geq0}$ be the RBB process.
  Then there is a positive constant $c$ such that for any $\epsilon\in(0,1/2)$ and $\eta\in\Omega$ the \emph{(configuration) mixing times}
  \begin{equation*}
    t_{\mathrm{mix}}(\eta,\epsilon)
    :=\inf\big\{t\geq0\colon \Vert \PP_\eta(\eta(t)\in\cdot)-\nu\Vert<\epsilon\big\}
  \end{equation*}
  satisfy
  \begin{equation*}
    t_{\mathrm{mix}}(\eta,\epsilon)\leq c\big(\Vert\eta\Vert_\infty+(\log L)^c\big),
  \end{equation*}
  for every $L\in\natural$ such that $L\geq c/\epsilon$.
\end{teo}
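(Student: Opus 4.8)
The plan is to use the path-coupling technique of Bubley–Dyer, following the strategy of Hermon–Salez for the mean-field zero-range process, but adapting it to the parallel-update setting. The key idea is that one should not try to couple two copies of the RBB process directly, but rather reduce the problem to controlling the time for two copies started from configurations differing by a single particle to coalesce. Concretely, I would proceed through the following steps.

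\textbf{Step 1: a thermalization estimate.} First I would show that after a time linear in $\Vert\eta\Vert_\infty$ (plus a polylogarithmic correction), the RBB process reaches a configuration whose occupation numbers have exponentially decaying tails, uniformly in the starting point. The heuristic is that a site with $k$ particles loses one particle per step deterministically and gains a number of particles that is $O(1)$ on average (since $L\bar w(t)\le rL$ balls are redistributed among $L$ sites, the mean influx per site is at most $r$), so tall stacks shrink at linear speed; meanwhile the bulk equilibrates on a much faster $(\log L)^{O(1)}$ timescale because the influx to each site is essentially Poisson with bounded parameter. This is the content of one of the two lemmata announced at the end of the introduction, and I would invoke it as a black box: after time $t_0 = c(\Vert\eta\Vert_\infty + (\log L)^c)$ we may assume $\Vert\eta(t_0)\Vert_\infty \le C\log L$ with high probability, and more strongly that the law of $\eta(t_0)$ is close in total variation to a measure supported on such ``good'' configurations.

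\textbf{Step 2: path coupling on the good set.} On the set of configurations with $\Vert\eta\Vert_\infty = O(\log L)$, I would put the graph metric in which adjacent configurations differ by moving one particle from one site to another, so that the diameter is $O(L\log L)$ (or $O(L)$ if we only need to connect configurations with the same, bounded, typical profile). For two such adjacent configurations I construct a coupling of one step of the RBB dynamics: use the \emph{same} uniform variables $U_y(t+1)$ for the two copies at every site $y$ where the copies agree and are non-empty, and couple the reassignment of the two discrepant particles as cleverly as possible — typically sending them to the same site, which either preserves a single discrepancy or annihilates it. The expected change in distance per step should be shown to be non-positive, or at least the distance should be a supermartingale until coalescence, with a uniformly positive chance of the discrepancy being killed in $O(1)$ steps (this happens, e.g., when the discrepant particle sits on a site that empties and its partner does too, or when both extra particles land on a commonly-emptied site). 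The delicate point, and the reason parallel updates require new ideas, is that moving a particle can change whether a site is empty, which flips an entire indicator $\ind_{\{\eta_y(t)>0\}}$ and hence changes the \emph{number} of balls being redistributed in the two copies — so the two Maxwell–Boltzmann vectors $B(t+1)$ have different total mass and cannot be coupled to be equal. I would handle this by a ``greedy/monotone'' coupling of the two reassignments that differ by exactly one ball, controlling the extra discrepancies it creates.

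\textbf{Step 3: from coalescence time to mixing time.} Combining Steps 1–2: by path coupling, the total variation distance between two copies started from good configurations decays geometrically once we control the single-discrepancy coalescence time; multiplying the per-step contraction (or the $O(1)$-expected single-discrepancy coalescence time) by the diameter $O(L\log L)$ and by the number of discrepancy-reduction phases, together with a union bound, gives mixing from good configurations in time $O(L \,\mathrm{polylog}\,L)$ — and in fact $O(L)$ with the sharper profile-matching metric, or $o(L)$ when $\Vert\eta\Vert_\infty = o(L)$, since then $t_0 = o(L)$ and the good set is reached quickly. Finally, for an arbitrary starting configuration $\eta$, run the chain for the thermalization time $t_0$ from Step 1, apply the triangle inequality for total variation ($\Vert \PP_\eta(\eta(t_0+s)\in\cdot) - \nu\Vert$ is bounded by the thermalization error plus the mixing-from-good-configurations error, averaged over the law of $\eta(t_0)$, using that $\nu$ is invariant), and optimize $s$. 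This yields $t_{\mathrm{mix}}(\eta,\epsilon) \le c(\Vert\eta\Vert_\infty + (\log L)^c)$ provided $L \ge c/\epsilon$, the hypothesis entering through the high-probability bounds in Steps 1–2.

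\textbf{The main obstacle} I expect is Step 2: designing the one-step coupling so that a single particle discrepancy does not proliferate under parallel updating. Unlike the sequential zero-range process, here a discrepancy at an empty/non-empty site changes the global reassignment, so the naive coupling can create $\Theta(1)$ new discrepancies per step with constant probability, which would be fatal. The resolution must be a carefully chosen coupling of two Maxwell–Boltzmann vectors differing by one ball — essentially an optimal transport / monotone coupling — together with an argument (a Lyapunov function more refined than raw Hamming distance, perhaps weighting discrepancies at tall sites less) showing the discrepancy measure still contracts in expectation. This is precisely the ``new ideas'' the introduction alludes to, and I would expect it to occupy the bulk of the two concluding lemmata.
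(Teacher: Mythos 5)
Your overall skeleton does match the paper's: a thermalization lemma giving exponential tails of the occupation numbers after a time linear in $\Vert\eta\Vert_\infty$ (Lemma~\ref{pro:exp}), a decomposition of an arbitrary pair of configurations into at most $rL$ adjacent pairs, a reduction to the coalescence time of a single-particle discrepancy, and a final triangle inequality against $\nu$ (with the tail $\nu(\Vert\xi\Vert_\infty>(\log L)^c)$ controlled by ergodicity plus the exponential bound). However, the heart of the argument --- your Step 2 --- is left unresolved, and this is a genuine gap rather than a routine detail. You correctly observe that under parallel updates a single discrepancy changes the total number of redistributed balls, so the two Maxwell--Boltzmann vectors have different masses and a naive coupling creates new discrepancies; but your proposed remedy (an unspecified ``optimal transport / monotone'' coupling of the two redistribution vectors together with a refined Lyapunov function that contracts in expectation) is neither constructed nor shown to contract, so the proof has a hole exactly where the parallel updating bites, i.e.\ exactly at the point the whole theorem turns on.

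The paper's resolution is different and sidesteps contraction entirely: the two coupled copies are built from a \emph{single} RBB environment $(\eta(t))_{t\geq0}$ with $rL-1$ particles plus one tagged particle each, $\eta^X_x(t)=\eta_x(t)+\ind_{\{X(t)=x\}}$ and $\eta^Y_x(t)=\eta_x(t)+\ind_{\{Y(t)=x\}}$, where a tagged particle stays put as long as its site contains environment particles and is redistributed only when it is alone, using one auxiliary uniform variable $U_0(t+1)$ shared by both copies. By construction both $\eta^X$ and $\eta^Y$ are genuine RBB processes, they differ by exactly one ball at every time, the discrepancy can never proliferate, and coalescence occurs as soon as both tagged particles are simultaneously alone; thus no coupling of mismatched redistribution vectors and no contracting metric are needed. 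The quantitative work then goes into the tail bound $\PP_\chi\big(\tau>\kappa(\Vert\eta\Vert_\infty\vee(\log L)^c)\big)\leq c/L^2$ (Theorem~\ref{pro:2}), obtained from Lemma~\ref{pro:exp} together with a two-stage argument: after a time of order $\Vert\eta\Vert_\infty$ the tagged sites hold only $O(\log(1+\Vert\eta\Vert_\infty))$ particles with probability at least $1/2$ (Lemma~\ref{lem:2}), from such a state coalescence occurs in that many steps with probability at least $c^{-O(\log(1+\Vert\eta\Vert_\infty))}$ (Lemma~\ref{lem:3}, Proposition~\ref{cor:2}), and one iterates over $(\log L)^{O(1)}$ blocks. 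Since $L^{-2}$ beats the path length $rL$, the final bound is just the coupling inequality summed along the path --- not the ``per-step contraction times diameter'' assembly of your Step 3.
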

\begin{re}
  This result implies the correct bound on the \emph{mixing time}
  \begin{equation*}
    t_{\mathrm{mix}}(\epsilon):=\sup_{\eta\in\Omega} t_{\mathrm{mix}}(\eta,\epsilon).
  \end{equation*}
  In fact by the \emph{diameter bound} (see \S 7.1.2  in \cite{Le:Pe}) we know that $t_{\mathrm{mix}}(\epsilon)\geq rL/2$.
  By Theorem~\ref{teo:mai} and the bound $\Vert\eta\Vert_\infty\leq r L$ we have that $t_{\mathrm{mix}}(\epsilon)\leq c'L$ for some positive constant  $c'$.
Thus $t_{\mathrm{mix}}(\epsilon)$ is of order $L$.

Note that Theorem~\ref{teo:mai} says more.
If $\Vert\eta\Vert_\infty=o(L)$ then $t_{\mathrm{mix}}(\eta,\epsilon)=o(L)$.
This means that when the system starts in a state  with $o(L)$ particles per site the equilibrium is attained in a time negligible with respect to $t_{\mathrm{mix}}(\epsilon)$.  
\end{re}

\section{Proof of the main result}
\label{sec:pro}

The proof of Theorem~\ref{teo:mai} is based on Lemmata \ref{pro:exp} and \ref{lem:tel}.
The first one states that,  after a thermalization time depending linearly on the initial state, the distribution of the site occupation number of the RBB process decays exponentially.

\begin{lemma}
  \label{pro:exp}
  There are positive constants $\theta$, $\kappa$, $\alpha$ such that 
  \begin{equation}
        \label{eq:ex}
    \EE_\eta\big(e^{\theta\eta_x(t)}\big)
    \leq \kappa\big(1+e^{\theta(\eta_x-\alpha t)}\big),
  \end{equation}
  for all $\eta\in\Omega$, $x\in[L]$ and $t\geq0$.
  In particular, for any $a\geq0$
  \begin{equation}
    \label{eq:Ce}
    \PP_\eta\big(\eta_x(t)\geq(\eta_x-\alpha t)\vee0+a\big)
    \leq 2\kappa e^{-\theta a}.
  \end{equation}
\end{lemma}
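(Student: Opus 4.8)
The plan is to bound $M_t:=\EE_\eta\big(e^{\theta\eta_x(t)}\big)$ by a one-step recursion and then iterate it. Since $(\eta_x(t)-1)\vee0=\eta_x(t)-\ind_{\{\eta_x(t)>0\}}$ is a function of $\eta(t)$, and since, conditionally on $\eta(t)$, the variable $B_x(t+1)$ is a sum of $L\bar w(t)$ independent Bernoulli$(1/L)$ variables, \eqref{eq:gcb} gives
\begin{equation*}
  \EE\big(e^{\theta\eta_x(t+1)}\mid\eta(t)\big)
  =e^{\theta((\eta_x(t)-1)\vee0)}\big(1+(e^\theta-1)/L\big)^{L\bar w(t)}
  \le e^{\bar w(t)(e^\theta-1)}\Big(\ind_{\{\eta_x(t)=0\}}+e^{-\theta}e^{\theta\eta_x(t)}\ind_{\{\eta_x(t)\ge1\}}\Big).
\end{equation*}
The multiplier $e^{\bar w(t)(e^\theta-1)-\theta}$ of $e^{\theta\eta_x(t)}$ is $<1$ precisely when $\bar w(t)$ is bounded away from $1$; this is the mechanism producing the claimed decay.

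The other ingredient is the elementary occupancy bound: the sites $y\neq x$ contain $rL-\eta_x(t)$ particles, so $\bar w(t)\le\min\{1,\,r+1/L-\eta_x(t)/L\}$. Fix a constant $\epsilon_0>0$ (pinned down at the end) and pick $\theta>0$ so small that $\lambda:=\exp\big((1-\epsilon_0)(e^\theta-1)-\theta\big)<1$, which is possible since the exponent equals $-\epsilon_0\theta+O(\theta^2)$ as $\theta\downarrow0$. On the event $\{\bar w(t)\le1-\epsilon_0\}$ the display above yields $\EE\big(e^{\theta\eta_x(t+1)}\mid\eta(t)\big)\le\lambda e^{\theta\eta_x(t)}$ when $\eta_x(t)\ge1$ and $\le e^{e^\theta-1}$ when $\eta_x(t)=0$, while on the complement it is at most $\mu e^{\theta\eta_x(t)}$ with $\mu:=e^{e^\theta-1-\theta}>1$. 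Moreover the occupancy bound forces $\{\bar w(t)>1-\epsilon_0\}\subseteq\{\eta_x(t)<K\}$ with $K:=\lceil(r-1+\epsilon_0)L\rceil+1$, which for $r\ge1$ is of order $L$; hence a priori $e^{\theta\eta_x(t)}\le e^{\Theta(L)}$ on the ``bad'' event, and it is this which makes the estimate nontrivial.

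To close the recursion one needs the quantitative fact that, started from any configuration, the occupancy cannot remain close to $1$: there are constants $\epsilon_0>0$, $k_0\in\natural$, $c_2>0$, depending only on $r$, such that
\begin{equation*}
  \PP_\eta\big(\bar w(t)>1-\epsilon_0\big)\le e^{-c_2 L}\qquad\text{for all }\eta\in\Omega\text{ and all }t\ge k_0.
\end{equation*}
Granting this, taking expectations in the one-step bound gives $M_{t+1}\le e^{e^\theta-1}+\mu\,e^{\theta K}e^{-c_2 L}+\lambda M_t$ for $t\ge k_0$; if $\theta$ is also so small that $\theta(r-1+\epsilon_0)<c_2$, the middle term is $O(1)$, whence $M_{t+1}\le C'+\lambda M_t$. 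Iterating this, and bounding $M_{k_0}\le C''+C'''e^{\theta\eta_x}$ by iterating the crude estimate $M_{s+1}\le e^{e^\theta-1}+\mu M_s$ over the finitely many steps $s=0,\dots,k_0-1$, then writing $\lambda=e^{-\theta\alpha}$ with $\alpha:=\theta^{-1}\log(1/\lambda)>0$ and absorbing all constants into $\kappa$, gives \eqref{eq:ex}. Inequality \eqref{eq:Ce} follows at once from Markov's inequality applied to $e^{\theta\eta_x(t)}$, using \eqref{eq:ex} and $e^{\theta(\eta_x-\alpha t)}\le e^{\theta((\eta_x-\alpha t)\vee0)}$ to collect the two resulting terms into $2\kappa$.

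The occupancy estimate is the step I expect to be the main obstacle; I would prove it by a ``draining'' argument. If $\eta_y(t)\le k$ and no ball lands in site $y$ at any of the times $t+1,\dots,t+k$ — a conditional event of probability at least $\big((1-1/L)^L\big)^k$, since at each of those steps the conditional probability of no ball landing in $y$ equals $(1-1/L)^{L\bar w}\ge(1-1/L)^L$ and these combine by the tower property — then $\eta_y(t+k)=0$. Since at most $rL/(k+1)$ sites hold more than $k$ particles, taking $k=k_0:=\lceil2r\rceil$ leaves at least $L/2$ sites with $\eta_y(t)\le k_0$, so $\EE_\eta\big(L-N(t+k_0)\mid\eta(t)\big)\ge c_1 L$ for a constant $c_1>0$ and every $\eta(t)$, where $N(s):=L\bar w(s)$; in particular $\EE_\eta(L-N(t))\ge c_1 L$ for $t\ge k_0$. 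Conditionally on $\eta(t-k_0)$, the quantity $L-N(t)$ depends only on the scalar clock variables $U_y(s)$ with $t-k_0<s\le t$, and altering one of them moves a single ball and hence perturbs only a number of sites bounded in terms of $r$; McDiarmid's inequality then gives $\PP_\eta\big(L-N(t)<c_1 L/2\mid\eta(t-k_0)\big)\le e^{-c_2 L}$ uniformly in $\eta(t-k_0)$, and averaging yields the displayed estimate with $\epsilon_0:=c_1/2$. The genuinely delicate points are making the draining argument uniform in $r$ with a bounded $k_0$, controlling the bounded-difference constant independently of $L$, and checking that the two smallness requirements on $\theta$ are compatible with the $\epsilon_0$ produced here.
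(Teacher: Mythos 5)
Your proposal is correct in substance and follows the same overall skeleton as the paper: a one-step exponential-moment (Lyapunov) estimate for $e^{\theta\eta_x(t)}$ based on the conditional Binomial$(L\bar w(t),1/L)$ law of $B_x(t+1)$, contraction on the event that a positive fraction of sites is empty, a crude bound over an initial burn-in of $O_r(1)$ steps, and geometric iteration plus exponential Chebyshev for \eqref{eq:Ce}. Where you genuinely diverge is in the proof of the key occupancy estimate $\sup_\eta\PP_\eta(\bar w(t)\ge 1-\epsilon_0)\le e^{-cL}$ for $t\ge\lceil 2r\rceil$: the paper (Lemma~\ref{lem:1}) dominates $\eta(t)$ stochastically by a Maxwell--Boltzmann vector (Lemma~\ref{lem:dom}) and then exploits the \emph{negative association} of that distribution together with a Chernoff bound, whereas you bound the conditional expectation of the number of empty sites by a draining argument and then concentrate it with McDiarmid's inequality. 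Both yield the needed $e^{-cL}$ uniformly in the initial state; your route avoids the negative-association input (Dubhashi--Ranjan) at the price of having to verify the bounded-differences hypothesis, which is the one step you assert rather than prove: you must check that changing a single clock $U_y(s)$ alters the time-$t$ configuration only by the displacement of a single ball. This is true and short -- under the identity coupling a discrepancy of the form $\delta_u-\delta_v$ is preserved (each displaced ball either stays put or jumps to the site given by its clock, and the pair may annihilate), so the empty-site count changes by at most $2$ -- but it should be spelled out, since it is exactly the non-spreading property that makes McDiarmid applicable with constants independent of $L$. Your remaining bookkeeping (the inclusion $\{\bar w(t)>1-\epsilon_0\}\subseteq\{\eta_x(t)\le (r-1+\epsilon_0)L+1\}$, the compatibility of the two smallness conditions on $\theta$ with the previously fixed $\epsilon_0,c_2$, and the absorption of constants into $\kappa$, $\alpha=\theta^{-1}\log(1/\lambda)$) is sound; the paper handles the bad event slightly more crudely via $\eta_x\le rL$ and $\theta<\epsilon_0/r$, and handles the burn-in via the same Maxwell--Boltzmann domination instead of iterating the one-step bound, but these are cosmetic differences.
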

The second lemma asserts that the distributions of two RBB processes started from two different configurations, again after a thermalization time depending linearly on them, are close in total variation distance.

\begin{lemma}
  \label{lem:tel}
  There exists a positive constant $c$ such that
  \begin{equation*}
    \Vert\PP_\eta(\eta(t)\in\cdot)-\PP_\xi(\eta(t)\in\cdot)\Vert
    \leq\frac{c}{L}
  \end{equation*}
  for any $\eta,\xi\in\Omega$ and $t>c\big(\Vert\eta\Vert_\infty\vee \Vert\xi\Vert_\infty\vee(\log L)^c\big)$.
\end{lemma}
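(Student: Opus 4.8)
\textbf{Proof proposal for Lemma~\ref{lem:tel}.}
The plan is to use the path coupling technique of Bubley and Dyer. The key observation is that the total variation distance between $\PP_\eta(\eta(t)\in\cdot)$ and $\PP_\xi(\eta(t)\in\cdot)$ can be controlled along a path of configurations interpolating between $\eta$ and $\xi$, each step of which changes the configuration by moving a single particle. More precisely, by the triangle inequality it suffices to treat the case in which $\eta$ and $\xi$ differ by exactly one particle (say $\xi=\eta-\delta_x+\delta_y$ for some sites $x,y$), and then sum the resulting bounds along a shortest path; since the number of particles is $rL$, the path has length at most $O(L)$, so a per-step bound of $O(1/L^2)$ would give the claim. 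The first step is therefore to reduce to two copies $\eta(t)$ and $\xi(t)$ started from configurations differing by one particle, and to estimate the probability that they have \emph{not} yet coalesced by time $t$.

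The second step is to build an explicit coupling of the two one-particle-apart copies using the construction \eqref{eq:gc}: feed both processes the same uniform variables $U(t)$, except that one must carefully handle the site(s) where the occupation numbers differ, using the \emph{monotonicity} of the MaxwellÐBoltzmann redistribution to keep the two configurations ordered and differing by a single particle until they coalesce. Coalescence happens, roughly, when the ``extra'' particle sits in a bin that becomes empty in one copy but not the other, or more precisely when the discrepancy is absorbed by the $\ind_{\{\eta_x(t)>0\}}$ terms differing. Here Lemma~\ref{pro:exp} is the crucial input: after the thermalization time $\sim\Vert\eta\Vert_\infty\vee\Vert\xi\Vert_\infty$, the exponential-tail bound \eqref{eq:Ce} guarantees that each site has $O(\log L)$ particles with overwhelming probability, and in particular a uniformly positive fraction of sites is empty, so the ``extra'' particle has an $\Omega(1/L)$ chance per step of landing in a bin whose emptiness differs between the two copies. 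One then shows that in the additional $(\log L)^c$ window the coalescence probability is at least, say, $1/2$ per block of $O(\log L)$ steps, so that after the full time $t>c(\Vert\eta\Vert_\infty\vee\Vert\xi\Vert_\infty\vee(\log L)^c)$ the non-coalescence probability is at most $c/L$ (indeed much smaller, but $c/L$ suffices and absorbs the path-length factor).

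The third step is bookkeeping: combine the per-step/per-block coalescence estimate with the path-coupling sum over the $O(L)$ intermediate configurations, choosing the constant $c$ large enough that the thermalization times add up correctly (the thermalization time only depends on $\Vert\eta\Vert_\infty$ and $\Vert\xi\Vert_\infty$, not on the intermediate configurations, because the intermediate configurations have $\infty$-norm at most $\max(\Vert\eta\Vert_\infty,\Vert\xi\Vert_\infty)$ — this is the point that makes the path-coupling argument close). Finally one invokes the standard fact that the total variation distance is bounded by the probability that the coupled copies disagree.

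The main obstacle I expect is constructing the coupling so that the two copies remain exactly one particle apart (rather than drifting to a larger discrepancy) under the \emph{parallel} update: when we redistribute all removed balls simultaneously, a naive common-randomness coupling can create several discrepancies at once, because the \emph{number} of balls in flight differs by one between the two copies (one copy may have one more non-empty bin). Handling this requires a careful coupling of the two MaxwellÐBoltzmann vectors $B(t+1)$ with slightly different particle counts — essentially coupling them so they differ by a single ball — and verifying that the resulting configurations $\eta(t+1)$, $\xi(t+1)$ still differ by a single particle. This is precisely where ``the parallel updating of the RBB process requires new ideas'' compared to the sequential zeroÐrange analysis of \cite{He:Sa}, and it is the technical heart of the lemma.
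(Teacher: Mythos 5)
Your global strategy coincides with the paper's: path coupling over a chain of adjacent configurations of length at most $rL$, whose intermediate states have $\Vert\cdot\Vert_\infty$ bounded by $\Vert\eta\Vert_\infty\vee\Vert\xi\Vert_\infty$, a thermalization phase controlled by Lemma~\ref{pro:exp}, and a bound of order $1/L^2$ on the non-coalescence probability for each adjacent pair. But two essential steps are missing or would fail as written. First, you never construct the coupling that keeps two adjacent copies exactly one particle apart under the parallel update; you correctly identify this as the technical heart and then leave it open, proposing a ``careful coupling of two Maxwell--Boltzmann vectors'' with particle counts differing by one. The paper sidesteps that difficulty entirely: it runs a single environment RBB process with $rL-1$ particles and attaches two tagged particles $X(t),Y(t)$ which stay put whenever the environment at their site is positive (the ball removed from that bin is declared to be an environment ball) and which, when alone in their bin, are relocated using the \emph{same} uniform variable $U_0(t+1)$. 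The two coupled copies are then $\eta(t)+\delta_{X(t)}$ and $\eta(t)+\delta_{Y(t)}$, each a faithful RBB process with $rL$ particles, so the discrepancy is by construction a single particle at all times before the coalescing time $\tau$, and no coupling of Maxwell--Boltzmann vectors with unequal particle numbers is ever needed.

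Second, your quantitative coalescence estimate does not close. If, as you argue, the discrepancy has only a chance of order $1/L$ per step of being resolved, then over the available window of length of order $\Vert\eta\Vert_\infty\vee\Vert\xi\Vert_\infty\vee(\log L)^c$ the coalescence probability is only of order $\big(\Vert\eta\Vert_\infty+(\log L)^c\big)/L$, so the non-coalescence probability for a single adjacent pair stays close to $1$, nowhere near the $O(1/L^2)$ needed before summing over the $O(L)$ path steps; your claim of coalescence probability at least $1/2$ per block of $O(\log L)$ steps neither follows from that mechanism nor is otherwise justified. The paper's coupling avoids any $1/L$ coincidence: since simultaneously isolated tagged particles jump to the same site, coalescence only requires the two tagged bins to empty, an event quantified in Lemma~\ref{lem:3} after Lemma~\ref{lem:2} has reduced the occupation of the tagged sites to order $\log(1+\Vert\eta\Vert_\infty)$. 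Even then the per-block success probability (Proposition~\ref{cor:2}) is only polynomially small in $\log L$, which is why Theorem~\ref{pro:2} uses of order $(\log L)^{2+\beta}$ blocks and why the statement carries $(\log L)^c$ with a possibly large constant $c$, rather than the $O\big((\log L)^2\big)$ window your sketch suggests.
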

We can now prove Theorem~\ref{teo:mai}.

\begin{proofof}{Theorem~\ref{teo:mai}}
  We denote by $P_\eta^t:=\PP(\eta(t)\in\cdot)$ the distribution of $\eta(t)$ when $\eta(0)=\eta$, then (see \eg \cite{Le:Pe} Proposition~4.2)
  \begin{equation}
        \label{eq:4}
    \begin{split}
         2\Vert P_\eta^t-\nu\Vert
    &=\sum_{\zeta}\Big\vert P^t_\eta(\{\zeta\})-\sum_{\xi}\nu(\{\xi\}) P^t_\xi(\{\zeta\})\Big\vert\\
     &\leq \sum_{\zeta}\sum_{\xi}\nu(\{\xi\})\big\vert P^t_\eta(\{\zeta\})- P^t_\xi(\{\zeta\})\big\vert
    =    2 \sum_{\xi} \nu(\{\xi\}) \Vert P_\eta^t-P_\xi^t\Vert.
    \end{split}
  \end{equation}
  By Lemma~\ref{lem:tel} and equation~\eqref{eq:4} we have
  \begin{equation}
    \label{eq:5}
    \Vert P_\eta^t-\nu\Vert
    \leq \frac{c}{L}+ \nu\big(\big\{\xi\in\Omega\colon \Vert\xi\Vert_\infty>(\log L)^c\big\}\big),
  \end{equation}
  for any $t\geq c\big( \Vert\eta\Vert_\infty\vee (\log L)^c\big)$.
  Using sub-additivity, ergodicity of the RBB process and Lemma~\ref{pro:exp},  the last term of equation~\eqref{eq:5} can be bounded by
  \begin{equation*}
    \begin{split}
    &\sum_{x=1}^L\nu\big(\big\{\xi\in\Omega\colon \xi_x>(\log L)^c\big\}\big)
    =\lim_{t\to+\infty}\sum_{x=1}^L\PP_\eta\big(\eta_x(t)>(\log L)^c\big)\\
    &=\lim_{t\to+\infty}\sum_{x=1}^L\PP_\eta\big(\eta_x(t)>(\eta_x-\alpha t)\vee 0+(\log L)^c\big)
    \leq 2L\kappa e^{-\theta (\log L)^c},
    \end{split}
  \end{equation*}
  which, taking $c\geq 2$, is smaller than a positive constant times $L^{-2}$ and the result follows.
\end{proofof}

The proofs of Lemmata \ref{pro:exp} and \ref{lem:tel} will be discussed in the next two sections.

\section{Exponential decay of the distribution of the site occupation number}
\label{sec:p1}

To prove Lemma \ref{pro:exp}  we need some preliminary results.
The first one states that the RBB process at time $t$ is stochastically dominated by a Maxwell-Boltzmann distribution with  $tL$ particles and $L$ sites.
This gives a bound on the number of particles per site of the RBB process and it will be crucial because the Maxwell-Boltzmann distribution is \emph{negatively associated} (see \eg \cite{Du:Ra}).

\begin{lemma}
  \label{lem:dom}
  The RBB process is monotone.
  Furthermore there exists a random vector $\tilde{B}(t)$ with Maxwell-Boltzmann distribution with  $tL$ particles and $L$ sites such that:
  \begin{equation}
    \label{eq:bd}
    \eta_x(t)
    \leq (\eta_x(0)-t)\vee0+\tilde B_x(t).
  \end{equation}
  for any $x\in[L]$.
\end{lemma}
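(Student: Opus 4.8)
The plan is to prove both assertions by exhibiting explicit couplings built from the same i.i.d.\ uniform variables $U(t)$ that appear in the construction \eqref{eq:gc}.

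\emph{Monotonicity.} I would run all RBB processes with one common sequence $U(1),U(2),\dots$ (the grand coupling) and show by induction on $t$ that if $\eta\le\xi$ componentwise then the coupled processes satisfy $\eta(t)\le\xi(t)$ for every $t$. For the inductive step, assume $\eta(t)\le\xi(t)$. Then $\{y\colon\eta_y(t)>0\}\subseteq\{y\colon\xi_y(t)>0\}$, so from \eqref{eq:bw}, using the \emph{same} vector $U(t+1)$ but summing over a larger index set, one gets $B_x^\eta(t+1)\le B_x^\xi(t+1)$ for every $x$. On the other hand the map $n\mapsto n-\ind_{\{n>0\}}$ is non-decreasing on $\integer_+$, so $\eta_x(t)-\ind_{\{\eta_x(t)>0\}}\le\xi_x(t)-\ind_{\{\xi_x(t)>0\}}$. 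Adding these two inequalities and invoking \eqref{eq:gcb} yields $\eta_x(t+1)\le\xi_x(t+1)$. Since this coupling is Markovian and order-preserving, the RBB process is monotone.

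\emph{Stochastic domination.} On the same probability space I would split the particles sitting at $x$ at time $t$ into ``old'' particles $O_x(t)$ (those present at time $0$ and never yet reassigned) and ``new'' particles $N_x(t)$, with $O_x(0)=\eta_x(0)$, $N_x(0)=0$, and with the bookkeeping rule: whenever a particle has to be removed from a nonempty site, remove an old one if at least one is present. One checks that $O_x,N_x\ge0$ and $O_x(t)+N_x(t)=\eta_x(t)$ throughout, and that positivity of $O_x(t)$ forces site $x$ to be nonempty, so $O_x$ decreases by exactly one at every step at which it is positive and stays at $0$ afterwards; hence $O_x(t)=(\eta_x(0)-t)\vee0$ identically. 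For the new particles, $N_x(t+1)=N_x(t)+B_x(t+1)-\ind_{\{\eta_x(t)>0,\,O_x(t)=0\}}\le N_x(t)+B_x(t+1)$, so $N_x(t)\le\sum_{s=1}^{t}B_x(s)$. Combining, $\eta_x(t)\le(\eta_x(0)-t)\vee0+\sum_{s=1}^{t}B_x(s)$. Finally I would set $\tilde B_x(t):=\sum_{s=1}^{t}\sum_{y=1}^{L}\ind_{\{U_y(s)=x\}}$. The variables $U_y(s)$, $1\le y\le L$, $1\le s\le t$, are $tL$ i.i.d.\ uniform on $[L]$, so $\tilde B(t)$ has exactly the Maxwell--Boltzmann law with $tL$ particles and $L$ sites; and comparing with \eqref{eq:bw}, whose sum defining $B_x(s)$ ranges only over the nonempty sites $y$ at time $s-1$, gives $\sum_{s=1}^{t}B_x(s)\le\tilde B_x(t)$. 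This is precisely \eqref{eq:bd}.

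The only delicate point is the old/new bookkeeping: one must verify that the ``remove an old particle first'' convention is consistent, i.e.\ that $O_x$ and $N_x$ remain nonnegative with sum $\eta_x$, and that it indeed pins down $O_x(t)=(\eta_x(0)-t)\vee0$. Once this is set up correctly everything else reduces to the definitions \eqref{eq:gcb}--\eqref{eq:bw} and the elementary observation that the occupancy vector of $tL$ uniform placements is Maxwell--Boltzmann.
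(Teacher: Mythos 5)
Your proposal is correct and follows essentially the same route as the paper: the same grand coupling with shared $U(t)$'s gives monotonicity (support inclusion plus monotonicity of $z\mapsto z-\ind_{\{z>0\}}$), and the same $\tilde B_x(t)=\sum_{s=1}^{t}\sum_{y}\ind_{\{U_y(s)=x\}}$ dominates the arrivals. Your old/new-particle bookkeeping is just a repackaging of the paper's observation that, since at most one particle leaves a site per step, the initial stock contributes exactly $(\eta_x(0)-t)\vee 0$, obtained there by iterating \eqref{eq:gc} directly.
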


\begin{proof}
  The explicit construction of $(\eta(t))_{t\geq0}$ leading to equation \eqref{eq:gc} is a \emph{monotone coupling}.
  In fact if we start two copies $(\eta(t))_{t\geq0}$ and $(\eta'(t))_{t\geq0}$ of the RBB process which use the same $U(1),U(2),\dots$, such that for some $t\geq0$ and $\eta_x(t)\leq\eta'_x(t)$ $\forall x\in[L]$ then, as the function $z\mapsto(z-1)\wedge 0$ is increasing, by \eqref{eq:gc} we have $\eta_x(t+1)\leq\eta'_x(t+1)$  $\forall x\in[L]$.
  This implies that the RBB process is monotone (see \cite{Li} Definition~2.3).

  To prove equation \eqref{eq:bd} we observe that, by iterating \eqref{eq:gc} we get for any $t\geq0$ and $x\in[L]$
\begin{equation}
  \label{eq:df}
  \begin{split}
      \eta_x(t)
  &=\eta_x(0)-\sum_{s=0}^{t-1}\ind_{\{\eta_x(s)>0\}}+\sum_{s=0}^{t-1}\sum_{y=1}^L\ind_{\{\eta_y(s)>0\}}\ind_{\{U_y(s+1)=x\}}\\
  &\leq \eta_x(0)-\sum_{s=0}^{t-1}\ind_{\{\eta_x(s)>0\}}+\sum_{s=0}^{t-1}\sum_{y=1}^L\ind_{\{U_y(s+1)=x\}}.
  \end{split}
\end{equation}
Define for any $x\in[L]$ 
\begin{equation*}
  \tilde B_x(t)
  :=\sum_{s=0}^{t-1}\sum_{y=1}^L\ind_{\{U_y(s+1)=x\}}.
\end{equation*}
Because the random variables $\{U_y(s)\colon y\in[L], s\in\natural\}$ are \iid uniformly distributed on $[L]$ the random vector $\tilde B(t):=(\tilde B_1(t),\dots,\tilde B_L(t))$ has Maxwell-Boltzmann distribution with $t L$ particles and $L$ sites.
As the RBB dynamics defined by \eqref{eq:gc} removes at most one particle for any site if $\eta_x(0)>0$ then $\eta_x(s)>0$ for any $s\in\{0,\dots,\eta_x(0)-1\}$.
Thus
\begin{equation*}
  \begin{split}
      \eta_x(t)\leq\eta_x(0)- \sum_{s=0}^{\eta_x(0)\wedge t-1}\ind_{\{\eta_x(s)>0\}}+\tilde B_x(t)
  &=\eta_x(0)-\eta_x(0)\wedge t+\tilde B_x(t)\\
  &=(\eta_x(0)-t)\vee0+\tilde B_x(t).
  \end{split}
\end{equation*}
If $\eta_x(0)=0$ then by \eqref{eq:df} $\eta_x(t)\leq \tilde B_x(t)$, so for any $\eta\in\Omega$ equation~\eqref{eq:bd} holds.
\end{proof}

The next result states that if we start the RBB process from any configuration, after a fixed thermalization time, with high probability there are order $L$ empty sites.

\begin{lemma}
  \label{lem:1}
  Let $(\bar w(t))_{t\geq0}$  defined in \eqref{eq:bw}.
  There exists a constant $\epsilon_0\in(0,1)$ such that for any $\epsilon\in(0,\epsilon_0]$
  \begin{equation}
    \label{eq:1}
    \sup_{\eta\in\Omega}\PP_\eta(\bar w(t)\geq 1-\epsilon)
    \leq e^{-\epsilon L},
  \end{equation}
  for any $t\geq\lfloor 2r\rfloor\vee 1$, $L\geq2$ and $\sum_x\eta_x=rL$.
\end{lemma}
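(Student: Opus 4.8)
The plan is to show that after $t\geq\lfloor 2r\rfloor\vee1$ steps, the number of non-empty sites cannot be too large simply because there are only $rL$ particles in the system and, by the monotone coupling and Lemma~\ref{lem:dom}, the occupation numbers are controlled by a Maxwell-Boltzmann vector, which is negatively associated. More precisely, writing $N(t):=\sum_{x=1}^L\ind_{\{\eta_x(t)>0\}}=L\bar w(t)$ for the number of non-empty sites at time $t$, I want an upper bound on $\PP_\eta(N(t)\geq(1-\epsilon)L)$ that is exponentially small in $L$, uniformly in the starting configuration.

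\textbf{Step 1: reduce to a Maxwell-Boltzmann indicator sum.} By Lemma~\ref{lem:dom}, for $t\geq r$ (note $\lfloor2r\rfloor\vee1\geq r$ when $r\geq 1/2$; for small $r$ one argues separately or uses $t\geq 1$) and for every $x$ with $\eta_x(0)\le rL$ we have $(\eta_x(0)-t)\vee0\le (rL-t)\vee 0$; but this is not quite what is needed since a single site could still hold $rL$ particles. The correct route is: the event $\{\eta_x(t)>0\}$ implies $\{(\eta_x(0)-t)\vee0+\tilde B_x(t)>0\}$, i.e. either $\eta_x(0)>t$ or $\tilde B_x(t)\ge1$. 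Since $\sum_x\eta_x(0)=rL$, the number of sites with $\eta_x(0)>t$ is at most $rL/(t+1)\le rL/(\lfloor2r\rfloor+1)$, which is at most $L/2$ for $t\ge\lfloor2r\rfloor$ (here is where the threshold $\lfloor 2r\rfloor$ enters). Hence
\begin{equation*}
  N(t)\le \frac{L}{2}+\sum_{x=1}^L\ind_{\{\tilde B_x(t)\ge1\}},
\end{equation*}
so it suffices to bound $\PP\big(\sum_{x}\ind_{\{\tilde B_x(t)\ge1\}}\ge(1/2-\epsilon)L\big)$ for the Maxwell-Boltzmann vector $\tilde B(t)$ with $tL$ particles in $L$ sites.

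\textbf{Step 2: concentration for the Maxwell-Boltzmann occupancy.} The indicators $\ind_{\{\tilde B_x(t)\ge1\}}$ are increasing functions of the coordinates of a negatively associated vector, hence are themselves negatively associated (see \cite{Du:Ra}); therefore the Laplace transform of their sum factorizes as an upper bound, and a Chernoff/Hoeffding argument applies just as for independent Bernoulli variables. Each indicator has mean $1-(1-1/L)^{tL}\le 1-e^{-t}\cdot c$ — more carefully, $\EE\,\ind_{\{\tilde B_x(t)\ge1\}}=1-(1-1/L)^{tL}$, which for $L\ge2$ is bounded above by $1-4^{-t}$ — wait, this is close to $1$, not to $1/2$. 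So Step 1's split is essential: we do not need the mean to be small, only that $(1/2-\epsilon)L$ exceeds the sum's typical value. The sum has mean $\big(1-(1-1/L)^{tL}\big)L\approx L$, which is way above $(1/2-\epsilon)L$, so a one-sided deviation bound in the \emph{wrong} direction is needed. This means Step~1 as stated is too lossy, and I expect the actual argument to instead directly estimate the number of \emph{empty} sites from below.

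\textbf{Step 3 (the real argument): lower bound on empty sites.} Let me restart the reduction. The number of empty sites at time $t$ is $L-N(t)$. For $\bar w(t)\ge1-\epsilon$ to hold we need at most $\epsilon L$ empty sites. Now a site $x$ is empty at time $t$ as soon as $\eta_x(0)\le t$ \emph{and} $x$ receives no reassigned ball at step $t$, i.e. $U_y(t)\ne x$ for all non-empty $y$ at time $t-1$ with $\eta_y(t-1)>0$; the number of such $y$ is $N(t-1)\le L$, so conditionally on $\eta(t-1)$ the probability that $x$ is missed is $(1-1/L)^{N(t-1)}\ge(1-1/L)^L\ge 1/4$. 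The number of sites with $\eta_x(0)\le t$ is at least $L-rL/(t+1)\ge L/2$ for $t\ge\lfloor2r\rfloor$. Among these at least $L/2$ "eligible" sites, the events of being missed at step $t$ are negatively associated (they are decreasing functions of the negatively associated vector $(\ind_{\{U_y(t)=x\}})_x$ for each $y$, and products over independent $y$), so the count of missed eligible sites stochastically dominates a $\mathrm{Bin}(L/2,1/4)$ variable — or at least its Laplace transform is bounded by that of such a binomial. Hence the number of empty sites is at least $\mathrm{Bin}(L/2,1/4)$-distributed in the Laplace sense, with mean $L/8$, and by a Chernoff bound $\PP(\text{fewer than }\epsilon L\text{ empty})\le e^{-cL}$ for $\epsilon$ small enough, say $\epsilon_0=1/16$. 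Absorbing constants and being slightly careful with the exact exponent yields the stated $e^{-\epsilon L}$.

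\textbf{Main obstacle.} The delicate point is making the negative-association / stochastic-domination bookkeeping rigorous: one must check that "site $x$ is empty at time $t$" can be expressed, on a suitable event, as a decreasing function of an explicitly negatively associated family so that the Laplace transform factorizes. The conditioning on $\eta(t-1)$ (to control $N(t-1)$) and on which sites are eligible (to control $\eta_x(0)\le t$) has to be handled so that it does not destroy the independence across the driving variables $U(t)$; since $U(t)$ is independent of $\eta(t-1)$, conditioning on $\eta(t-1)$ is harmless, and within step $t$ the vector $(U_1(t),\dots,U_L(t))$ is i.i.d., so the occupancy indicators at step $t$ are a genuine Maxwell-Boltzmann vector with $N(t-1)$ balls — negatively associated — and everything goes through. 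I expect the write-up to first peel off the easy deterministic counting ($\ge L/2$ eligible sites when $t\ge\lfloor2r\rfloor$), then apply a single Chernoff bound for negatively associated Bernoulli variables to the miss-count.
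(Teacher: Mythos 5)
There is a genuine gap in the argument you ultimately rely on (your Step 3). The claim ``a site $x$ is empty at time $t$ as soon as $\eta_x(0)\le t$ and $x$ receives no reassigned ball at step $t$'' is false: being missed at the \emph{last} step only does not imply emptiness, because the site may have accumulated balls at intermediate steps $1,\dots,t-1$. For instance, if $\eta_x(0)=0$ and three balls land on $x$ at step $1$ and none afterwards, then $\eta_x(t)=3-(t-1)>0$ for small $t$ even though $x$ was missed at step $t$. The correct sufficient condition is that $x$ is missed at \emph{all} steps $1,\dots,t$ (equivalently $\tilde B_x(t)=0$ in the notation of Lemma~\ref{lem:dom}), together with $\eta_x(0)\le t$; this event has probability $(1-1/L)^{tL}\ge 4^{-t}$ per site, which is a constant only if $t$ is first reduced to $t_0=\lfloor 2r\rfloor\vee1$ via the Markov property (your Step 3 tries to treat all $t$ at once, and with the corrected multi-step condition the per-site probability $4^{-t}$ degenerates for large $t$, so the reduction is not optional). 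One cannot instead take ``eligible'' to mean $\eta_x(t-1)\le 1$: conservation of particles gives no lower bound on the number of such sites (e.g.\ all sites could hold exactly $r\ge2$ particles). With the corrected event, the counting ($\ge\lfloor L/2\rfloor$ sites with $\eta_x\le 2r\le t_0$), the domination \eqref{eq:bd}, negative association of the Maxwell--Boltzmann vector $\tilde B(t_0)$, and a Chernoff bound yield the statement --- but that is precisely the paper's proof.

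Your abandoned Steps 1--2 were actually closer to being correct than you concluded: the reason the deviation seemed to go in the wrong direction is that you summed the indicators $\ind_{\{\tilde B_x(t)\ge1\}}$ over \emph{all} $L$ sites while only discounting the bad sites by $L/2$. If you restrict the indicator sum to the good sites $V$ (those with $\eta_x(0)\le t_0$, $|V|=\lfloor L/2\rfloor$), the event $\bar w(t_0)\ge1-\epsilon$ forces $\sum_{x\in V}\ind_{\{\tilde B_x\ge1\}}\ge |V|-\epsilon L$, a threshold that exceeds the mean $p|V|$ with $p=1-(1-1/L)^{t_0L}\le 1-4^{-t_0}$ by a constant fraction of $L$ once $\epsilon<4^{-t_0}/3$, say; then the negative-association/Chernoff step you describe closes the proof. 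That repaired route is exactly the paper's argument, so the missing idea is the restriction of the occupancy sum to the low-occupancy half of the sites (and the corresponding use of $\tilde B_x(t_0)=0$ rather than a single-step miss).
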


\begin{proof}
  Fix $L\geq2$.
  It is enough to show that \eqref{eq:1} holds for $t=\lfloor 2r\rfloor\vee 1$.
  In fact, assuming that it holds for $t_k:=(\lfloor 2r\rfloor\vee 1)+k$ where $k\in\integer_+$, then by the Markov property
  \begin{equation*}
    \PP_\eta(\bar w(t_{k}+1)>1-\epsilon)
    =\EE_\eta[\PP_{\eta(1)}(\bar w(t_k)>1-\epsilon)]
    \leq \sup_{\eta}\PP_\eta(\bar w(t_k)\geq 1-\epsilon)
    \leq e^{-\epsilon L},
  \end{equation*}
  and \eqref{eq:1} follows for any $t\geq t_0=\lfloor 2r\rfloor\vee 1$.
 
  We prove \eqref{eq:1} for $t=t_0$.
  As the number of particles is $rL$, there exists $V\subseteq[L]$ such that $|V|=\lfloor L/2\rfloor$ and $\eta_x\leq 2r$ for any $x\in V$.
   Then, because $(\eta_x(0)- t_0)\vee0=0$ for any $x\in V$,  by \eqref{eq:bd} $\eta_x(t_0)\leq \tilde B_x(t_0):=\tilde B_x$.
   The monotonicity of the function
   \begin{equation*}
     \Omega\ni\xi\mapsto\sum_{x\in V}\ind_{\{\xi_x>0\}}\in\real
   \end{equation*}
 implies that
 \begin{equation*}
       \bar w(t_0)
       \leq\frac{1}{L}\sum_{x\in V}\ind_{\{\eta_x(t_0)>0\}} +1-\frac{\lfloor L/2\rfloor}{L}
    \leq\frac{1}{L}\sum_{x\in V}\ind_{\{\tilde B_x>0\}} +1-\frac{\lfloor L/2\rfloor}{L}.
 \end{equation*}
  So for any $\epsilon\in(0,1/3)$ 
  \begin{equation}
    \label{eq:2}
    \begin{split}
          \PP_\eta(\bar w(t_0)>1-\epsilon)
    &\leq\PP_\eta\Big(\frac{1}{L}\sum_{x\in V}\ind_{\{\tilde B_x>0\}}>\frac{\lfloor L/2\rfloor}{L}-\epsilon\Big)\\
    =\PP_\eta\Big(\frac{1}{\lfloor L/2\rfloor}\sum_{x\in V}\ind_{\{\tilde B_x>0\}}>1-\frac{L\epsilon}{\lfloor L/2\rfloor}\Big)
    &\leq \PP_\eta\Big(\frac{1}{\lfloor L/2\rfloor}\sum_{x\in V}\ind_{\{\tilde B_x>0\}}>1-3\epsilon\Big).
    \end{split}
  \end{equation}
  For $\lambda>0$ we apply the exponential Chebyshev inequality (see \cite{Sh} Chapter 1 \S 7) to get
  \begin{equation*}
    \PP_\eta\Big(\frac{1}{\lfloor L/2\rfloor}\sum_{x\in V}\ind_{\{\tilde B_x>0\}}>1-3\epsilon\Big)
    \leq e^{-\lambda (1-3\epsilon) \lfloor L/2\rfloor}\EE_\eta\Big(\prod_{x\in V}e^{\lambda \ind_{\{\tilde B_x>0\}}}\Big).
  \end{equation*}
  Because the Maxwell-Boltzmann distribution has the \emph{negative association} property  (see \cite{Du:Ra} Lemma~4 and Theorem~14), we have that
  \begin{equation*}
    \EE_\eta\Big(\prod_{x\in V}e^{\lambda \ind_{\{\tilde B_x>0\}}}\Big)
    \leq \prod_{x\in V}\EE_\eta\Big(e^{\lambda \ind_{\{\tilde B_x>0\}}} \Big)
    =\EE_\eta\Big(e^{\lambda \ind_{\{\tilde B_1>0\}}} \Big)^{\lfloor L/2\rfloor},
  \end{equation*}
  where $\ind_{\{\tilde B_1>0\}}$ follows the Bernoulli distribution with parameter $p:=1-(1-(1/L))^{t_0L}$.
  Thus by equation \eqref{eq:2}
    \begin{equation*}
    \PP_\eta(\bar w(t_0)>1-\epsilon)
    \leq e^{-\lambda (1-3\epsilon) \lfloor L/2\rfloor}\Big(e^{\lambda} p+(1-p)\Big)^{\lfloor L/2\rfloor}.
  \end{equation*}
  By optimizing over the constant $\lambda>0$ we obtain
      \begin{equation*}
    \PP_\eta(\bar w(t_0)>1-\epsilon)
    \leq e^{-I(\epsilon) \lfloor L/2\rfloor},
  \end{equation*}
    where, for $\epsilon<(1/3)(1/4)^{t_0}$,
  \begin{equation*}
    I(\epsilon)
    :=\max_{\lambda>0}\Big\{\lambda(1-3\epsilon)-\log\big[1+p(e^\lambda-1)\big]\Big\}, 
  \end{equation*}
  and the maximum is achieved at the point
  \begin{equation*}
     \lambda^*
     :=\log\Big[\frac{(1-3\epsilon)(1-p)}{3\epsilon p}\Big]>0.
  \end{equation*}
  By an explicit  computation
  \begin{equation*}
    \lim_{\epsilon\downarrow0}I(\epsilon)
    =\log(1/p)\geq -\log(1-(1/4)^{t_0}):=c>0,
  \end{equation*}
  So by taking $\epsilon\leq c/8$ small enough and such that $I(\epsilon)\geq c/2\geq4\epsilon$ the result follows.
\end{proof}
%

\begin{proofof}{Lemma~\ref{pro:exp}}
  The second part of the statement follows from the first one as an application of the exponential Chebyshev inequality.
  For the first one we will prove a bound on the discrete time
  derivative of the left hand side of \eqref{eq:ex}, (see
  \eqref{eq:gk} below).
  Then the results follows from standard arguments.
  Let $t_0= \lfloor2r\rfloor\vee 1$ then, by \eqref{eq:bd}, for any $t\leq t_0$ we have $\eta_x(t)\leq\eta_x(0)+\tilde B_x(t)$, where $\tilde B_x(t)$ is a binomial random variable with parameters $tL$ and $1/L$.
  Thus for any $L\geq2$, $t\leq t_0$ and $\lambda>0$
   \begin{equation}
     \label{eq:st}
     \EE_\eta\big[e^{\lambda\eta_x(t)}\big]
     \leq e^{\lambda\eta_x}\Big(1+\frac{e^\lambda-1}{L}\Big)^{t_0 L}
     \leq e^{\lambda\eta_x}e^{t_0(e^\lambda-1)}.
   \end{equation}
   Let $P$ be the transition matrix of $(\eta(t))_{t\geq0}$ and,
   for $\lambda>0$, define the function $\varphi_\lambda\colon\Omega\to\real$ as $\varphi_\lambda(\eta):=e^{\lambda\eta_x}$.
   For $t\geq t_0$, the result follows if we can show that there are positive constants $\theta$, $\gamma$ and $c$ such that (using the standard identification of functions with column vectors)
   \begin{equation}
     \label{eq:gk}
     P^{t_0+1}\varphi_\theta-P^{t_0}\varphi_\theta
     \leq -\gamma P^{t_0}\varphi_\theta +c.
   \end{equation}
  In fact, if we apply $P$ to both sides of \eqref{eq:gk} and iterate, we get
    \begin{equation}
     \label{eq:gkt}
     P^{t+1}\varphi_\theta
     \leq (1-\gamma) P^{t}\varphi_\theta +c
   \end{equation}
    for any $t\geq t_0$.
    Without loss of generality we assume $\gamma\in(0,1)$.
    Iterating \eqref{eq:gkt} 
   we get
   \begin{equation*}
     P^{t+1}\varphi_\theta
     \leq(1-\gamma)^{t-t_0+1}P^{t_0}\varphi_\theta+c\sum_{n=0}^{t-t_0}(1-\gamma)^n
     \leq(1-\gamma)^{t-t_0+1}P^{t_0}\varphi_\theta+\frac{c}{\gamma}
   \end{equation*}
   and
   \begin{equation}
     \label{eq:lt}
     \EE_\eta\big(e^{\theta\eta_x(t)}\big)
     \leq(1-\gamma)^{t-t_0}\EE_\eta\big(e^{\theta\eta_x(t_0)}\big)+\frac{c}{\gamma}
   \end{equation}
   for any $t>t_0$.
   By using \eqref{eq:st}, \eqref{eq:lt} and choosing the correct constants $\theta$, $\kappa$ and $\alpha$ equation \eqref{eq:ex} follows.

   To prove the bound \eqref{eq:gk} let $\epsilon_0$ be the constant in the statement of Lemma~\ref{lem:1}, and define the events
  \begin{equation*}
    E:=\big\{\bar w(t_0)\leq 1-\epsilon_0\big\}
    \qquad\text{and}\qquad
    F:=\big\{\eta_x(t_0)>0\big\}.
  \end{equation*}
  Then
    \begin{equation}
      \label{eq:AB}
      \begin{split}
           & (P^{t_0+1}\varphi_\lambda)(\eta)-(P^{t_0}\varphi_\lambda)(\eta)
    =\EE_\eta\Big[(P\varphi_\lambda)(\eta(t_0))-\varphi_\lambda(\eta(t_0))\Big]\\
    &=\EE_\eta\Big[\ind_{E\cap F}\big\{(P\varphi_\lambda)(\eta(t_0))-\varphi_\lambda(\eta(t_0))\big\}\Big]
    +\EE_\eta\Big[\ind_{(E\cap F)^c}\big\{(P\varphi_\lambda)(\eta(t_0))-\varphi_\lambda(\eta(t_0))\big\}\Big].
      \end{split}
  \end{equation}
  Observe that
  \begin{equation}
    \label{eq:gk2}
    (P\varphi_\lambda)(\eta)-\varphi_\lambda(\eta)
    =\Big(e^{-\lambda\ind_{\{\eta_x>0\}}}\Big(1+\frac{e^{\lambda}-1}{L}\Big)^{L\bar w(\eta)}-1\Big) \varphi_\lambda(\eta),
  \end{equation}
  thus the first term on the right hand side of \eqref{eq:AB} can be bounded above by
  \begin{equation}
    \label{eq:bI}
    \Big(e^{-\lambda}\Big(1+\frac{e^{\lambda}-1}{L}\Big)^{L(1-\epsilon_0)}-1\Big)\EE_\eta \big[(1-\ind_{(E\cap F)^c})\varphi_\lambda(\eta(t_0))\big].
  \end{equation}
  To bound the second one let $\bar\lambda:=\log(1+\log2)$ and choose $\lambda\leq\bar\lambda$ so that
  \begin{equation*}
    e^{-\lambda\ind_{\{\eta_x>0\}}}\Big(1+\frac{e^{\lambda}-1}{L}\Big)^{L\bar w(\eta)}-1
    \leq\Big(1+\frac{e^{\lambda}-1}{L}\Big)^L-1
    \leq 1.
  \end{equation*}
  Thus the second term on the right hand side of \eqref{eq:AB} can be bounded above by 
  \begin{equation*}
    \EE_\eta\big[\ind_{(E\cap F)^c}\varphi_\lambda(\eta(t_0))\big].
  \end{equation*}
   Furthermore by \eqref{eq:AB} and the bound \eqref{eq:bI} we get
  \begin{equation*}
    (P^{t_0+1}\varphi)(\eta)-(P^{t_0}\varphi)(\eta)
    \leq\EE\big[\big((1-\beta_\lambda) \ind_{(E\cap F)^c}+\beta_\lambda\big)\varphi(\eta(t_0))\big],
  \end{equation*}
  where
  \begin{equation*}
    \beta_\lambda
    :=e^{-\lambda}\Big(1+\frac{e^{\lambda}-1}{L}\Big)^{L(1-\epsilon_0)}-1
    \leq 1
  \end{equation*}
  for $\lambda\leq\bar\lambda$.
    Because $\ind_{(E\cap F)^c}\leq \ind_{E^c}+\ind_{F^c}$ we obtain
  \begin{equation*}
       (P^{t_0+1}\varphi_\lambda)(\eta)-(P^{t_0}\varphi_\lambda)(\eta)
    \leq (1-\beta_\lambda)\big[\EE_\eta(\varphi_\lambda(\eta(t_0)) \ind_{E^c})+ \EE_\eta(\varphi_\lambda(\eta(t_0)) \ind_{F^c})\big]+\beta_\lambda(P^{t_0}\varphi_\lambda)(\eta).
  \end{equation*}
  Using that $\eta_x(t_0)\leq r L$ and Lemma~\ref{lem:1} we get 
  \begin{equation*}
    \EE_\eta(\varphi_\lambda(\eta(t_0)) \ind_{E^c})
    \leq e^{\lambda r L}\PP_\eta(E^c)
    \leq e^{\lambda r L}e^{-\epsilon_0 L}
  \end{equation*}
  and 
    \begin{equation*}
    \EE_\eta(\varphi_\lambda(\eta(t_0)) \ind_{F^c})
    =\PP_\eta(F^c)\leq 1.
  \end{equation*}
  Thus taking $\lambda<\epsilon_0/r$
  \begin{equation*}
    \EE_\eta(\varphi_\lambda(\eta(t_0)) \ind_{E^c})+ \EE_\eta(\varphi_\lambda(\eta(t_0)) \ind_{F^c})
    \leq 2.
  \end{equation*}
  This implies that for any $\lambda\leq\bar\lambda\wedge(\epsilon_0/r)$ 
    \begin{equation*}
    (P^{t_0+1}\varphi_\lambda)(\eta)-(P^{t_0}\varphi_\lambda)(\eta)
    \leq\beta_\lambda\EE\big[\varphi_\lambda(\eta(t_0))\big] +2(1-\beta_\lambda).
  \end{equation*}
  Observe that
  \begin{equation*}
    \beta_\lambda
    \leq\exp\big\{-\lambda+(1-\epsilon_0)(e^\lambda-1)\big\}-1
    = \exp\Big\{\lambda\Big[(1-\epsilon_0)\frac{e^\lambda-1}{\lambda}-1\Big]\Big\}-1
  \end{equation*}
  and as $(e^\lambda-1)/\lambda\downarrow 1$ for $\lambda\downarrow0$ there is a positive small enough  constant $\theta\leq\bar\lambda\wedge(\epsilon_0/r)$ such that $(1-\epsilon_0)(e^{\theta}-1)/\theta\leq 1-\epsilon_0/2$.
  We define
    \begin{equation*}
    -\gamma
    :=\beta_\theta
    \leq \exp\big\{-\frac{ \theta\epsilon_0}{2}\big\}-1<0,
  \end{equation*}
   so that \eqref{eq:gk} holds with $\gamma=-\beta_\theta$ and $c:=2(1+\gamma)$.
\end{proofof}

\section{Coalescing time}
\label{sec:ct}

To prove Lemma~\ref{lem:tel} we use the path coupling technique.
We reduce the problem of bounding the total variation distance of the distributions of two copies of the RBB process starting from different initial configurations to the problem of bounding the coalescing time of two tagged particles coupled to the RBB process.

We construct an $\Omega\times[L]\times[L]$ valued process $(\chi(t))_{t\geq0}:=((\eta(t),X(t),Y(t)))_{t\geq0}$, such that $(\eta(t))_{t\geq0}$ is an RBB process, and $(X(t))_{t\geq0}$, $(Y(t))_{t\geq0}$ are the positions of two new particles as follows.
Consider the RBB process $(\eta(t))_{t\geq0}$ with $rL-1$ particles defined in \eqref{eq:gc}.
For any $t>0$ let $U_0(t)$ be a random variable uniformly distributed in $[L]$ and such that $U_0(1),U_0(2),\dots$ are \iid and independent of $U(1),U(2),\dots$.  
For $x_0,y_0\in[L]$ define $X(0):=x_0$, $Y(0):=y_0$ and for any $t\geq0$
\begin{equation*}
  \begin{split}
    X(t+1)&:=X(t)\ind_{\{\eta_{X(t)}>0\}}+U_0(t+1) \ind_{\{\eta_{X(t)}=0\}}\\
    Y(t+1)&:=Y(t)\ind_{\{\eta_{Y(t)}>0\}}+U_0(t+1) \ind_{\{\eta_{Y(t)}=0\}}.
  \end{split}
\end{equation*}
That is the tagged balls move if and only if they are alone in their respective bins (after which coalescence is guaranteed to occur).
Notice that $(\chi(t))_{t\geq0}$ is a time homogeneous Markov chain and the $\Omega$ valued processes $(\eta^X(t))_{t\geq0}$ and $(\eta^Y(t))_{t\geq0}$ defined for any $x\in[L]$ as
\begin{equation*}
  \begin{split}
    \eta^X_x(t)&:=\eta_x(t)+ \ind_{\{X(t)=x\}}\\
    \eta^Y_x(t)&:=\eta_x(t)+ \ind_{\{Y(t)=x\}},
  \end{split}
\end{equation*}
are two coupled copies of the RBB process with $L$ sites and $rL$ particles.
The processes $(\eta^X(t))_{t\geq0}$ and $(\eta^Y(t))_{t\geq0}$ are equal except for the position of the two tagged particles until they coalesce.
Furthermore for any $s\leq t$ we have $\big\{X(s)=Y(s)\big\}\subseteq \big\{X(t)=Y(t)\big\}$;
thus if $\eta^X(s)=\eta^Y(s)$ for some $s\geq0$ then $\eta^X(t)=\eta^Y(t)$ for any $t\geq s$.
We denote by $\tau$ the \emph{coalescing time} of the two new particles, namely
\begin{equation}
  \label{eq:ct}
  \tau:=\inf\big\{t\geq0\colon X(t)=Y(t)\big\}.
\end{equation}
The next result gives an upper bound on the coalescing time in term of $\Vert\eta(0)\Vert_\infty$.  
\begin{teo}
  \label{pro:2}
  There is a positive constant $c$ such that
  \begin{equation*}
    \PP_\chi\big(\tau>\kappa(\Vert\eta\Vert_\infty\vee(\log L)^c)\big)
    \leq\frac{c}{L^2},
  \end{equation*}
  for any $\chi=(\eta,x_0,y_0)\in\Omega\times[L]\times[L]$.
\end{teo}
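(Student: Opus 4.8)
The plan is to run the chain for a thermalization time proportional to $\Vert\eta\Vert_\infty$, after which the configuration will with very high probability be ``tame'' --- all occupations $O(\log L)$ and at least $\epsilon_0 L$ empty sites --- and then to show that from such a tame state the two tagged particles coalesce within a further $(\log L)^{O(1)}$ steps with probability $\ge 1-c/L^{2}$. The whole argument rests on the elementary remark that, by the definition of $(\chi(t))_{t\ge0}$, if at some time $s$ one has $\eta_{X(s)}(s)=\eta_{Y(s)}(s)=0$, then both tagged particles are reassigned by the \emph{same} variable $U_{0}(s+1)$, so $X(s+1)=Y(s+1)$ and $\tau\le s+1$. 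Thus it suffices to produce such a time $s$.

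For the thermalization I would set $T_{0}:=\lceil\Vert\eta\Vert_\infty/\alpha\rceil\vee(\lfloor2r\rfloor\vee1)$ and $W:=(\log L)^{c_{1}}$ with $c_{1}$ a large constant fixed later (since the coupled copies $\eta^{X}$ and $\eta^{Y}$ differ from $\eta$ by at most one particle per site, Lemmata~\ref{pro:exp} and~\ref{lem:1} apply to them with the same constants up to harmless adjustments). By \eqref{eq:Ce} one has $\PP_\eta(\eta_{x}(s)\ge a)\le2\kappa e^{-\theta a}$ for all $s\ge T_{0}$ and $x\in[L]$; a union bound over the $L$ sites and the $W$ times $s\in[T_{0},T_{0}+W]$ with $a=C\log L$, together with Lemma~\ref{lem:1} at each such time, shows that the event
\begin{equation*}
  \mathcal G:=\Big\{\sup_{T_{0}\le s\le T_{0}+W}\Vert\eta(s)\Vert_\infty\le C\log L,\ \ \sup_{T_{0}\le s\le T_{0}+W}\bar w(s)\le1-\epsilon_{0}\Big\}
\end{equation*}
satisfies $\PP_\chi(\mathcal G^{c})\le c/L^{3}$ once $C$ is large enough. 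On $\mathcal G$ the local occupation of each tagged particle --- the occupation of the site it currently sits on --- is $\le C\log L$, has drift $\le-\epsilon_{0}$ while positive (because $\bar w\le1-\epsilon_{0}$), and is replaced by the occupation of a uniformly chosen site (still $\le C\log L$) each time it reaches $0$.

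On $\mathcal G$, hitting-time estimates for a negative-drift walk combined with the exponential tail of a uniform site's occupation in \eqref{eq:Ce} should give that each of $X$ and $Y$ is alone at least once in every sub-interval of length $K_{1}\log L$ with probability $\ge1-L^{-3}$, so each is alone at $\ge W/(K_{1}\log L)$ times of the window. The coalescence step I have in mind is: at an alone time $\rho$ of $Y$, the variables $U_{0}(\rho+1),U_{0}(\rho+2),\dots$ land on an empty site at each step with probability $\ge\epsilon_{0}$ on $\mathcal G$, so $Y$ stays alone for the next $\ell$ steps with probability $\ge\epsilon_{0}^{\ell}$; meanwhile the negative-drift local occupation of $X$ reaches $0$ --- making $X$ alone as well --- within $\ell$ steps with probability bounded below, provided $\ell$ is large enough compared with that occupation; and as soon as both are alone at the same time, coalescence follows by the remark above. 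I would then sum these chances over the $\ge W/(K_{1}\log L)$ alone times of $Y$ (thinned so the trial windows are disjoint, and using the strong Markov property to turn successive trials into conditionally independent Bernoulli trials), and finally iterate the whole coalescence phase over $\lceil\log L\rceil$ successive windows --- each preceded by a re-thermalization of only $O(\log L)$ steps, the configuration being already tame --- to push the failure probability below $c/L^{2}$. Since the total time is $T_{0}+\lceil\log L\rceil(O(\log L)+W)=O(\Vert\eta\Vert_\infty+(\log L)^{c_{1}+1})$, this gives the claim with suitable $\kappa$ and $c$.

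The hard part is the coalescence step. The ``alone'' patterns of $X$ and $Y$ and the local occupation seen by $X$ at the alone times of $Y$ are all functions of the single randomness $(\eta,U,U_{0})$, so the trials and their outcomes are correlated, and the heuristic ``each of polynomially-many-in-$\log L$ trials succeeds with probability bounded below'' must be made rigorous. The two points that need care are (a) a tail bound, uniform over the relevant $Y$-stopping times $\rho$, for the local occupation of $X$ at time $\rho$ --- which I expect to obtain by conditioning on the last time $X$ was reassigned, using that reassignment targets are uniform, and then invoking \eqref{eq:Ce} and the negative drift --- and (b) choosing the constants $K_{1}$, $\ell$, $C$, $c_{1}$ and the number of phases in the right order, so that after the iteration the polynomially small errors dominate the polylogarithmic ones (the length of the required ``alone run'' of $Y$ must be kept small enough that $\epsilon_{0}^{\ell}$ is only polylogarithmically small, which forces the local occupation of $X$ to be controlled at a correspondingly fine scale). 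This ordering of constants, rather than any individual estimate, is the crux.
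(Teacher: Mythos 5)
Your overall architecture is the same as the paper's: tame the environment (exponential tails via \eqref{eq:Ce} plus a positive fraction of empty sites via Lemma~\ref{lem:1}), exploit the key remark that if $\eta_{X(s)}(s)=\eta_{Y(s)}(s)=0$ then both tagged particles are moved by the same $U_0(s+1)$ and coalesce, and run many Markov-restarted trials so that a per-trial success probability that is only polylogarithmically small still beats $c/L^2$ after iteration. However, there is a genuine gap, and it sits exactly where you say the crux is: to make a single trial cost only a polylogarithmic factor, the occupation of the site currently hosting $X$ (and $Y$) at the start of the trial must be reduced to scale $O(\log\log L)$, not merely the $O(\log L)$ that your event $\mathcal G$ provides. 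With only $\mathcal G$, the forced ``alone run'' of $Y$ has length $\ell=\Theta(\log L)$, its probability $\epsilon_0^{\ell}$ is polynomially small in $L$, and neither summing over the $\approx(\log L)^{c_1-1}$ alone times nor iterating over $\lceil\log L\rceil$ windows can push the failure probability below $c/L^2$. Your proposed fix (a) --- condition on the last time $X$ was reassigned and use that the target was uniform --- does not work as stated: the site $X$ currently occupies is conditioned to have remained nonempty ever since $X$ arrived there (that is precisely why $X$ is still there), so its occupation law is biased upward and the unconditional tail bound \eqref{eq:Ce} cannot simply be invoked at that random time.

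The paper closes exactly this gap with Lemma~\ref{lem:2} and Proposition~\ref{cor:2}: since $X(t),Y(t)\in\{x_0,y_0\}\cup\{U_0(1),\dots,U_0(t)\}$, a set of at most $2+t$ sites chosen \emph{independently} of the environment, one union-bounds \eqref{eq:Ce} over those $2+t$ candidate positions rather than over all $L$ sites; restarting by the Markov property at each trial, when the current configuration satisfies $\Vert\eta\Vert_\infty\le a=O(\log L)$, the relevant horizon is $t=O(\Vert\eta\Vert_\infty)=O(\log L)$, so the tagged sites' occupations are at most $c_2\log(1+\Vert\eta\Vert_\infty)=O(\log\log L)$ with probability $\ge1/2$. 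Only then does an event of the type you describe (no fresh arrivals at the tagged sites while they drain, and the $U_0$'s landing on empty sites, as in Lemma~\ref{lem:3}) have probability $\ge(1/c)^{O(\log\log L)}=(1+a)^{-\beta}$, which is what makes the geometric iteration in the proof of Theorem~\ref{pro:2} succeed. The rest of your scheme (the union bounds defining $\mathcal G$, the simultaneous-aloneness coalescence step, the restarted Bernoulli trials) is sound and parallels the paper, but without this two-scale reduction --- or a correct substitute for your step (a) --- the argument does not give the stated bound.
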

As the proof of Theorem~\ref{pro:2} needs some extra work we first use it to prove Lemma~\ref{lem:tel}.

\begin{proofof}{Lemma~\ref{lem:tel}}
  Recall that $P_\eta^t=\PP_\eta(\eta(t)\in\cdot)$ is the distribution of $\eta(t)$ when $\eta(0)=\eta$.
  We say that two configurations $\eta,\xi\in\Omega$ are \emph{adjacent} if there are $x,y\in[L]$ such that $\xi_x=\eta_x-1$, $\xi_y=\eta_y+1$ and $\xi_z=\eta_z$ for any $z\in[L]\setminus\{x,y\}$.
  We observe that for any $\eta,\xi\in\Omega$ there is a sequence of adjacent configurations $\eta:=\zeta_0,\zeta_1,\dots,\zeta_k:=\xi$, with $k\leq r L$ and $\max_{j\in[k]}\Vert\zeta_j\Vert_\infty\leq \Vert\eta\Vert_\infty\vee \Vert\xi\Vert_\infty$.
  By the triangle inequality
  \begin{equation}
    \label{eq:tri}
    \Vert P_\eta^t-P_\xi^t\Vert
    \leq\sum_{j=1}^k \Vert P_{\zeta_{j-1}}^t-P_{\zeta_{j}}^t\Vert.
  \end{equation}
  Thus we have to bound $\Vert P_{\zeta_{j-1}}^t-P_{\zeta_{j}}^t\Vert$ for two adjacent configurations.
  We can consider a process $(\chi(t))_{t\geq0}$, defined at the beginning of this section, starting from the initial condition $\chi_{j-1}$ such that $\eta^X(0)=\zeta_{j-1}$ and $\eta^Y(0)=\zeta_{j}$.
  Then, if $\tau$ is the coalescing time defined in \eqref{eq:ct}, by Theorem~5.4 of \cite{Le:Pe} and Theorem~\ref{pro:2}, we have
    \begin{equation*}
    \Vert P_{\zeta_{j-1}}^t-P_{\zeta_{j}}^t\Vert
    \leq \PP_{\chi_{j-1}}(\tau>t)
    \leq \PP_{\chi_{j-1}}\big(\tau>\kappa\big( (\Vert\zeta_{j-1}\Vert_\infty-1)\vee (\log L)^c\big)\big)
    \leq\frac{c}{L^2}
  \end{equation*}
  for any $t\geq \kappa\big( \Vert\eta\Vert_\infty\vee \Vert\xi\Vert_\infty\vee (\log L)^c\big)\geq \kappa\big( (\Vert\zeta_{j-1}\Vert_\infty-1)\vee (\log L)^c\big)$.
  Thus by \eqref{eq:tri} the result follows.
\end{proofof}

The proof of Theorem~\ref{pro:2} is based on Proposition~\ref{cor:2} which derives from Lemmata \ref{lem:2} and \ref{lem:3}.
Lemma~\ref{lem:2}  states that the occupation number of the sites of the tagged particles, for $t$ large enough, is unlikely to be too high.
\begin{lemma}
  \label{lem:2}
  There exist finite positive constants $c_1$ and $c_2$ such that if $\bar t:=c_1\Vert\eta\Vert_\infty$ and $\bar a=c_2\log (1+\Vert\eta\Vert_\infty)$ then
  \begin{equation*}
    \PP_\chi(\eta_{X(\bar t)}(\bar t)\vee \eta_{Y(\bar t)}(\bar t)\leq \bar a)
    \geq\frac{1}{2}
  \end{equation*}
  for any $\chi=(\eta,x_0,y_0)\in\Omega\times[L]\times[L]$.
\end{lemma}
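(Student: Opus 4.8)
Here is a plan for proving Lemma~\ref{lem:2}.

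The plan is to put $\bar t:=c_1\Vert\eta\Vert_\infty$ and $\bar a:=c_2\log(1+\Vert\eta\Vert_\infty)$ with $c_1,c_2$ to be fixed, to bound $\PP_\chi(\eta_{X(\bar t)}(\bar t)>\bar a)$, to obtain the same bound for $Y$, and to conclude by a union bound once $c_2$ is chosen large. (We may assume $\Vert\eta\Vert_\infty\ge1$, the empty configuration being trivial.) The difficulty is that $X(\bar t)$ is a \emph{random} site, correlated with $\eta(\bar t)$, so Lemma~\ref{pro:exp} cannot be applied to it directly. The key point is that the tagged particle is relocated only to sites drawn from the family $(U_0(t))_{t\ge1}$, which is independent of the RBB process $(\eta(t))_{t\ge0}$; hence, conditioning on the \emph{last} time before $\bar t$ at which the tagged particle is alone in its bin, its position at time $\bar t$ is either the deterministic initial site $x_0$ or a fresh uniform site independent of $(\eta(t))_{t\ge0}$.

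Concretely, I would set $\sigma:=\max\{u\in[1,\bar t]\colon\eta_{X(u-1)}(u-1)=0\}$, with $\sigma:=0$ if no such $u$ exists, and observe that, since $X$ does not move at any step after time $\sigma$ (and at step $\sigma$, if $\sigma\ge1$, it is relocated to $U_0(\sigma)$), one has $X(\bar t)=x_0$ on $\{\sigma=0\}$ and $X(\bar t)=U_0(\sigma)$ on $\{\sigma\ge1\}$. Splitting $\{\eta_{X(\bar t)}(\bar t)>\bar a\}$ according to the value of $\sigma$: on $\{\sigma=0\}$ this event is contained in $\{\eta_{x_0}(\bar t)>\bar a\}$, whereas on $\{\sigma=s\}$ with $s\in[1,\bar t]$ it is the disjoint union over $z\in[L]$ of $\{\sigma=s\}\cap\{U_0(s)=z\}\cap\{\eta_z(\bar t)>\bar a\}$. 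Now, for fixed $s$ and $z$, the event $\{\sigma=s,U_0(s)=z\}$ equals $\{U_0(s)=z\}$ intersected with $\{\eta_{X(s-1)}(s-1)=0\}$ and with $\{\eta_z(u)>0\ \text{for}\ s\le u\le\bar t-1\}$; the latter two events, together with $\{\eta_z(\bar t)>\bar a\}$, are measurable with respect to $\sigma(\eta(\cdot),U_0(1),\dots,U_0(s-1))$ and therefore independent of $\{U_0(s)=z\}$. Hence each of these contributions is at most $\tfrac1L\PP_\eta(\eta_z(\bar t)>\bar a)$, and summing over $z$ and $s$ gives $\PP_\chi(\eta_{X(\bar t)}(\bar t)>\bar a)\le(1+\bar t)\sup_{z\in[L]}\PP_\eta(\eta_z(\bar t)>\bar a)$.

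It then remains to apply Lemma~\ref{pro:exp} to the process $(\eta(t))_{t\ge0}$ (which carries $rL-1$ particles, so its constants $\theta,\kappa,\alpha$ are still controlled by $r$): choosing $c_1\ge1/\alpha$ forces $\alpha\bar t\ge\Vert\eta\Vert_\infty\ge\eta_z$, so the shift $(\eta_z-\alpha\bar t)\vee0$ in \eqref{eq:Ce} vanishes and $\PP_\eta(\eta_z(\bar t)>\bar a)\le2\kappa e^{-\theta\bar a}=2\kappa(1+\Vert\eta\Vert_\infty)^{-\theta c_2}$ uniformly in $z$. This yields $\PP_\chi(\eta_{X(\bar t)}(\bar t)>\bar a)\le2\kappa(1+c_1)(1+\Vert\eta\Vert_\infty)^{1-\theta c_2}$, and the same bound holds for $Y$. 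Choosing $c_2$ large enough that $4\kappa(1+c_1)\,2^{1-\theta c_2}\le\tfrac12$ (legitimate because $1+\Vert\eta\Vert_\infty\ge2$) and taking a union bound over the two tagged particles then proves the lemma. The one genuine obstacle is the independence step above: isolating the independence of the relocation variable $U_0(\sigma)$ from $(\eta(t))_{t\ge0}$ is exactly what produces the factor $1/L$ compensating the sum over the $L$ sites; once that is in place, the estimate is a direct consequence of the exponential decay in Lemma~\ref{pro:exp}.
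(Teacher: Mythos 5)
Your proposal is correct and follows essentially the same route as the paper: both exploit that $X(\bar t),Y(\bar t)$ must lie in the set $\{x_0,y_0\}\cup\{U_0(1),\dots,U_0(\bar t)\}$, which is independent of the environment $(\eta(t))_{t\geq0}$, union bound over its at most $2+\bar t$ candidates, and then apply Lemma~\ref{pro:exp} with $c_1$ of order $1/\alpha$ so the shift in \eqref{eq:Ce} vanishes, choosing $c_2$ large. Your last-relocation-time decomposition via $\sigma$ is just a more explicit (and slightly sharper, $1+\bar t$ versus $2+\bar t$) implementation of the paper's union bound over $\mathcal{U}(t)$, so the argument is sound and matches the paper's.
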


\begin{proof}
 To prove this lemma we decouple the (possible) path of the tagged particles from the environment process $(\eta(t))_{t\geq0}$ and \eqref{eq:r1}.
  Then the result follows using the exponential bound of Lemma~\ref{pro:exp}.
  
  For any $t\geq0$ we have
  \begin{equation*}
    X(t),Y(t)\in\{x_0,y_0\}\cup\big\{U_0(1),\dots,U_0(t)\big\}
    :=\mathcal{U}(t).
  \end{equation*}
  Using a union bound, independence of $\mathcal{U}(t)$ and $\eta(t)$, and a crude upper bound on $\vert\mathcal{U}(t)\vert$ for any $a>0$ we get 
  \begin{equation}
    \label{eq:r1}
    \begin{split}
      \PP_\chi(\eta_{X(t)}\vee \eta_{Y(t)}>a)
      =\PP_\chi(\exists\, z\in \mathcal{U}(t)\colon \eta_z(t)>a)
          \leq (2+t) \max_{z\in[L]}\PP_\chi(\eta_z(t)>a).
    \end{split}
  \end{equation}
  Furthermore using \eqref{eq:Ce} we have
  \begin{equation*}
          \PP_\chi(\eta_{X(t)}\vee \eta_{Y(t)}>a)
      \leq (2+t)2\kappa\exp\big\{-\theta(a-(\Vert\eta\Vert_\infty-\alpha t)\vee 0)\big\}.
  \end{equation*}
  The result follows if one can choose $a$ and $t$ such that the last term in the above inequality is less than $1/2$.
  Taking $c_1:=\lfloor1/\alpha\rfloor+1$, $t=\bar t:=c_1\Vert\eta\Vert_\infty$ and $c_2$ such that
  \begin{equation*}
    a=\bar a:=
    c_2\log(1+\Vert\eta\Vert_\infty)
    \geq \frac{1}{\theta}\log\Big[4 \kappa\Big(\Big\lfloor\frac{r\Vert\eta\Vert_\infty}{\alpha}\Big\rfloor+3\Big) \Big],
  \end{equation*}
  this happens.
\end{proof}

The next lemma links the coalescing time with the starting site occupation numbers of the tagged particles.

\begin{lemma}
  There exist a positive constant $c$ such that, for any $L\geq c$ and $\chi=(\eta,x_0,y_0)\in\Omega\times[L]\times[L]$,
  \label{lem:3}
  \begin{equation*}
    \PP_\chi(\tau\leq \eta_{x_0}\vee \eta_{y_0}+\lfloor 2r\rfloor+1)
    \geq \Big(\frac{1}{c}\Big)^{\eta_{x_0}\vee \eta_{y_0}+\lfloor 2r\rfloor}.
  \end{equation*}
\end{lemma}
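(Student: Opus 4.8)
The plan is to construct, for the driving variables $(U(t),U_0(t))_{t\ge1}$, an explicit \emph{favourable event} on which the two tagged particles coalesce within $\eta_{x_0}\vee\eta_{y_0}+\lfloor 2r\rfloor+1$ steps, and to check that this event is an intersection of at most $\eta_{x_0}\vee\eta_{y_0}+\lfloor 2r\rfloor+1$ sub-events, each of conditional probability (given the past) bounded below by a constant $\delta=\delta(r)\in(0,1)$; then $\PP_\chi(\tau\le\eta_{x_0}\vee\eta_{y_0}+\lfloor2r\rfloor+1)\ge\tfrac12\,\delta^{\,\eta_{x_0}\vee\eta_{y_0}+\lfloor2r\rfloor+1}\ge(1/c)^{\eta_{x_0}\vee\eta_{y_0}+\lfloor2r\rfloor}$ for a suitable $c$ and all $L\ge c$. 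The key observation is that if at some time $s$ one has $\eta_{X(s)}(s)=\eta_{Y(s)}(s)=0$, i.e.\ both tagged particles are \emph{alone} in their bins, then by the definition of $(\chi(t))_{t\ge0}$ both jump at step $s+1$ to the same site $U_0(s+1)$, so $X(s+1)=Y(s+1)$ and $\tau\le s+1$. It is therefore enough to force the environment so that $X$ and $Y$ become alone \emph{at the same time} $\le\eta_{x_0}\vee\eta_{y_0}+\lfloor2r\rfloor$.

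If $x_0=y_0$ then $\tau=0$, and if $\eta_{x_0}=\eta_{y_0}=0$ then both particles are alone at time $0$ and $\tau\le1$ surely; so assume $x_0\ne y_0$ and, without loss of generality, $\eta_{y_0}\le\eta_{x_0}$, write $m:=\eta_{x_0}$ and assume $m\ge1$. In the \emph{balanced} case $\eta_{x_0}=\eta_{y_0}=m$ I would take the event $A$ that no reassigned ball lands on $x_0$ or $y_0$ during the first $m$ steps. On $A$ one has $\eta_{x_0}(t)=\eta_{y_0}(t)=m-t$ for $t\le m$, so neither particle has moved by time $m$ and both are alone exactly at time $m$, whence $\tau\le m+1$. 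Since at each step at most $rL$ balls are reassigned, each uniform on $[L]$ and, conditionally on the past, independent, the conditional probability that none of them hits $\{x_0,y_0\}$ is at least $(1-2/L)^{rL}\ge 4^{-2r}$ for $L\ge4$; multiplying over the $m$ steps gives $\PP_\chi(A)\ge(4^{-2r})^{m}$, which already yields the claim in this case.

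In the \emph{unbalanced} case $\eta_{y_0}<m$ I would still drain $x_0$ (no reassigned ball on $x_0$ during steps $1,\dots,m$, so $X$ is alone at $x_0$ at time $m$); the issue is to have $Y$ become alone simultaneously. One route is to keep $Y$ at $y_0$ by forcing $\eta_{y_0}(t)\ge1$ for $t\le m-1$ and $\eta_{y_0}(m)=0$, which requires that on the $m-\eta_{y_0}$ steps on which $y_0$ would otherwise empty exactly one reassigned ball reaches $y_0$ (and none reaches $x_0$), and on all other steps no ball reaches $\{x_0,y_0\}$. The avoidance events have constant conditional probability as above, but the one-ball-on-$y_0$ events have constant conditional probability only when a constant fraction of the bins is occupied; this I would secure by prepending a \emph{warm-up} of $\lfloor2r\rfloor$ steps, during which $x_0$ (and $y_0$, if $\eta_{y_0}\ge1$) are padded so that the tagged particles do not move, and then invoking Lemma~\ref{lem:1}: after $\lfloor2r\rfloor\vee1$ steps there are at least $\epsilon_0 L$ empty bins with probability $\ge1-e^{-\epsilon_0 L}$, and, together with the bound on the maximal occupation from Lemma~\ref{lem:dom}, enough occupied bins to supply the refill ball. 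This warm-up accounts for the additive $\lfloor2r\rfloor$ in the statement. An alternative that avoids refilling is to let $Y$ leave $y_0$ once $y_0$ empties and then keep $Y$ \emph{bouncing on empty bins} (available at every time after the warm-up by Lemma~\ref{lem:1}, each bounce succeeding with probability $\ge\epsilon_0$) until time $m+\lfloor2r\rfloor$, when it is alone together with $X$. In all cases the favourable event is an intersection of at most $m+\lfloor2r\rfloor+1$ sub-events of conditional probability $\ge\delta$, together with the high-probability events from Lemma~\ref{lem:1}, and the bound follows.

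The hard part will be precisely this unbalanced regime: draining $x_0$, synchronising $Y$ with $X$, and making sure that \emph{every} per-step manipulation — no ball on a prescribed $O(1)$-set, exactly one ball on $y_0$, or a tagged particle landing on an empty bin — has probability bounded away from $0$. The last two need the environment to be \emph{spread out}: neither over-concentrated (too few occupied bins) nor saturated (no empty bins), which is exactly what the $\lfloor2r\rfloor$-step warm-up plus Lemma~\ref{lem:1} delivers. The remaining care goes into the degenerate sub-cases — notably $\eta_{y_0}=0$, where $Y$ unavoidably leaves $y_0$ at the first step and, when $r$ is large so that the warm-up is not yet over, must be re-captured on an occupied bin and re-released by draining it — and into checking that all the constants can be taken to depend on $r$ alone.
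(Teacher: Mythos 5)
Your reduction to ``both tagged particles are alone at the same time $s$, hence they both jump to $U_0(s+1)$ and coalesce'' is exactly the right starting point, and your balanced case is fine. But the actual content of the lemma is the unbalanced case with an arbitrary environment, and there the proposal has a genuine gap: none of the three mechanisms you offer has the uniform per-step constant you need. The refill events (``exactly one reassigned ball reaches $y_0$'') have conditional probability of order $\bar w(t)e^{-\bar w(t)}$, so they are bounded below by a constant only if a constant \emph{fraction of bins is occupied}; Lemma~\ref{lem:1} gives a lower bound on the number of \emph{empty} bins, not occupied ones, and a constant-length warm-up cannot help: if all $rL-1$ environment balls start in one bin, the number of occupied bins after $\lfloor 2r\rfloor$ steps is still $O(1)$ (it can at most double per step), so the refill probability is $O(1/L)$, and Lemma~\ref{lem:dom} does not repair this. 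The bouncing events need a constant fraction of \emph{empty} bins at the time of each bounce, but Lemma~\ref{lem:1} only applies at times $\geq\lfloor2r\rfloor\vee1$, while $Y$ is forced to move already at time $1$ when $\eta_{y_0}=0$; for $r\geq1$ the configuration may have no empty bins at all at these early times. The recapture-and-drain fix either makes the cost depend on the occupancy of the capturing bin (which is not controlled by $\eta_{x_0}\vee\eta_{y_0}$, and the bound must not depend on $\Vert\eta\Vert_\infty$) or, if you recapture on a low-occupancy bin, reintroduces the very synchronization problem you started with, again at times where neither refills nor bounces are available. So the plan, as written, does not close.

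The missing idea is that you do not need to synchronize the two draining times, nor to track the tagged particles through time. Take $\bar t:=\eta_{x_0}\vee\eta_{y_0}+\lfloor2r\rfloor$ and impose two requirements: (i) no reassigned ball of the environment lands on $\{x_0,y_0\}$ during \emph{all} of the first $\bar t$ steps (probability $(1-2/L)^{\bar t L}\geq 16^{-\bar t}$), so that both $x_0$ and $y_0$ are empty at time $\bar t$ whatever the initial mismatch; and (ii) every one of $U_0(1),\dots,U_0(\bar t)$ lies in the set of empty sites of $\eta(\bar t)$, i.e.\ is empty \emph{at the final time} $\bar t$ only. Since $X(\bar t),Y(\bar t)\in\{x_0,y_0\}\cup\{U_0(1),\dots,U_0(\bar t)\}$, on this event both tagged particles sit on empty bins at time $\bar t$ and coalesce at time $\bar t+1$. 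Requirement (ii) costs at least $\epsilon^{\bar t}$ on the event that $\eta(\bar t)$ has more than $\epsilon L$ empty sites, because $(U_0(s))_s$ is independent of the environment; and that event has probability $1-e^{-\epsilon L}$ by the analogue of Lemma~\ref{lem:1} for the environment conditioned on (i), which is again an RBB-type chain with reassignments uniform on $[L]\setminus\{x_0,y_0\}$, so the same negative-association argument applies since $\bar t\geq\lfloor2r\rfloor$. This removes every regime-dependent step in your construction and yields the bound $(\epsilon/16)^{\bar t}(1-e^{-\epsilon L})$ directly.
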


\begin{proof}
  Define $\bar t:=\eta_{x_0}\vee \eta_{y_0}+\lfloor 2r\rfloor$ and for any $\xi\in\Omega$ let $W(\xi):=\{x\in[L]\colon \xi_x=0\}$ be set of the empty sites of the configuration $\xi$.
  As $\tau\leq\inf\big\{t\geq0\colon \eta_{X(t)}=\eta_{Y(t)}=0\big\}+1$, we have that for any $\epsilon>0$
  \begin{equation*}
       \big\{\tau\leq\bar t+1\big\}
       \supseteq \big\{\tilde{B}_{x_0}(\bar t)=\tilde{B}_{y_0}(\bar t)=0\big\}\cap\bigcap_{s=1}^{\bar t}\big\{U_0(s)\in W(\eta(\bar t)), \vert W(\eta(\bar t))\vert>\epsilon L \big\}.
     \end{equation*}
     The first event implies that at time $\bar t$ the sites $x_0$ and $y_0$ may be occupied only by the tagged particles and the second one implies at time $\bar t$ the sites occupied by the tagged particles, if different from $x_0$ and $y_0$, are empty.
  Thus $\PP_\chi(\tau\leq\bar t+1)$ is bounded from below by
  \begin{equation}
    \label{eq:b1}
    \PP_\chi\big(\tilde{B}_{x_0}(\bar t)=\tilde{B}_{y_0}(\bar t)=0\big)
    \PP_\chi\Big(\bigcap_{s=1}^{\bar t}\big\{U_0(s)\in W(\eta(\bar t)), \vert W(\eta(\bar t))\vert>\epsilon L\Big\vert \tilde{B}_{x_0}(\bar t)=\tilde{B}_{y_0}(\bar t)=0\Big).
  \end{equation}
  For the first term, as $\tilde{B}(\bar t)$ has Maxwell-Boltzmann distribution with $\bar t L$ particles and $L$ sites (see Lemma~\ref{lem:dom}), we get
  \begin{equation}
    \label{eq:3b}
    \PP_\chi\big(\tilde{B}_{x_0}(\bar t)=\tilde{B}_{y_0}(\bar t)=0\big)
    =\Big(1-\frac{2}{L}\Big)^{\bar t L}
    \geq \Big(\frac{1}{16}\Big)^{\bar t},
  \end{equation}
  for any  $L\geq 4$.
  To bound the second factor in \eqref{eq:b1} we introduce an $\Omega\times[L]\times[L]$ valued process $(\chi'(t))_{t\geq0}:=((\eta'(t),X'(t),Y'(t)))_{t\geq0}$ such that
  \begin{equation*}
    \PP_\chi\big(\chi'(1)\in \Gamma_1,\dots, \chi'(t)\in \Gamma_t\big)
    =\PP_\chi\big(\chi(1)\in \Gamma_1,\dots, \chi(t)\in \Gamma_t\big\vert \tilde{B}_{x_0}(\bar t)=\tilde{B}_{y_0}(\bar t)=0\big)
  \end{equation*}
  for any $t\leq\bar t$ and $\Gamma_1,\dots,\Gamma_t\subseteq\Omega\times[L]\times[L]$.
  The process $(\chi'(t))_{t\geq0}$ for $t\leq\bar t$ has the same distribution of $(\chi(t))_{t\geq0}$ conditioned to  $\tilde{B}_{x_0}(\bar t)=\tilde{B}_{y_0}(\bar t)=0$.
  More precisely $\chi'(0):=\chi$ and $(\eta'(t))_{t\geq0}$ is the Markov chain recursively defined by
  \begin{equation*}
  \eta_x'(t+1)
  :=\eta_x'(t)-\ind_{\{\eta_x'(t)>0\}}+\sum_{y=1}^L\ind_{\{\eta_y'(t)>0\}}\ind_{\{U_y'(t+1)=x\}},
  \qquad x\in[L],
\end{equation*}
where  $U'(t):=(U_1'(t),\dots,U_L'(t))$, $U_1'(t),\dots,U_L'(t)$ are \iid random variables uniformly distributed on $[L]\setminus\{x_0,y_0\}$ and such that $U'(1),U'(2),\dots$ are independent and independent of $U_0(1),U_0(2),\dots$.  
Furthermore $(X'(t))_{t\geq0}$ and $(Y'(t))_{t\geq0}$ are recursively defined by
\begin{equation*}
  \begin{split}
    X'(t+1)&:=X'(t)\ind_{\{\eta_{X'(t)}>0\}}+U_0(t+1) \ind_{\{\eta_{X'(t)}=0\}}\\
    Y'(t+1)&:=Y'(t)\ind_{\{\eta_{Y'(t)}>0\}}+U_0(t+1) \ind_{\{\eta_{Y'(t)}=0\}}.
  \end{split}
\end{equation*}
Thus
  \begin{equation*}
    \begin{split}
      \PP_\chi\Big(\bigcap_{s=1}^{\bar t}\big\{U_0(s)\in W(\eta(\bar t)), \vert W(\eta(\bar t))\vert>\epsilon L\big\}\Big\vert \tilde{B}_{x_0}(\bar t)=\tilde{B}_{y_0}(\bar t)=0\Big)\\
      =      \PP_\chi\Big(\bigcap_{s=1}^{\bar t}\big\{U_0(s)\in W(\eta'(\bar t)), \vert W(\eta'(\bar t))\vert>\epsilon L, \big\}\Big)\\
       =\sum_{w>\epsilon L}\PP_\chi\Big(\bigcap_{s=1}^{\bar t}\big\{ U_0(s)\in W(\eta'(\bar t))\}\Big\vert \vert W(\eta'(\bar t))\vert=w\Big) \PP_\chi\big(\vert W(\eta'(\bar t))\vert=w\big).
    \end{split}
  \end{equation*}
  By the independence of $U_0(1),\dots,U_0(\bar t),\eta'(\bar t)$ we get
  \begin{equation*}
       \PP_\chi\Big(\bigcap_{s=1}^{\bar t}\big\{ U_0(s)\in W(\eta'(\bar t))\}\Big\vert \vert W(\eta'(\bar t))\vert=w\Big)
      =\Big(\frac{w}{L}\Big)^{\bar t}
      > \epsilon^{\bar t}.
  \end{equation*}
  for $w>\epsilon L$.
  Thus
  \begin{equation}
    \label{eq:2b}
    \begin{split}
             \PP_\chi\Big(\bigcap_{s=1}^{\bar t}\big\{U_0(s)\in W(\eta'(\bar t)), \vert W(\eta'(\bar t))\vert>\epsilon L \big\}\Big)
       \geq \epsilon^{\bar t}\PP_\chi\big(\vert W(\eta'(\bar t))\vert>\epsilon L\big)\\
       = \epsilon^{\bar t}\,\Big(1-\PP_\chi\Big(\frac{1}{L}\sum_{x=1}^L\ind_{\{\eta'_x(\bar t)>0\}}\geq1-\epsilon \Big)\Big).
    \end{split}
  \end{equation}
  We claim that there exists a constant $\epsilon_0'\in(0,1)$ such that for any $\epsilon\in(0,\epsilon_0']$
  \begin{equation}
    \label{eq:1b}
    \sup_{\eta\in\Omega}\PP_\eta\Big(\frac{1}{L}\sum_{x=1}^L\ind_{\{\eta'_x(t)>0\}}\geq1-\epsilon \Big)
    \leq e^{-\epsilon L},
  \end{equation}
  for any $t\geq\lfloor 2r\rfloor\vee 1$, $L\geq4$ and $\sum_x\eta_x=rL-1$.
  This result is the analogue of Lemma~\ref{lem:1} for the process $(\eta'(t))_{t\geq0}$ and can be proved along the same lines.
  We briefly sketch the proof.
  As in the proof of Lemma~\ref{lem:1} it is enough to prove \eqref{eq:1b} for $t=t_0=\lfloor 2r \rfloor\vee 1$ and following the same arguments we can show that
  \begin{equation*}
    \PP_\eta\Big(\frac{1}{L}\sum_{x=1}^L\ind_{\{\eta'_x(t_0)>0\}}>1-\epsilon\Big)
    \leq \PP_\eta\Big(\frac{1}{\lfloor L/2\rfloor}\sum_{x\in V}\ind_{\{\tilde B'_x>0\}}>1-3\epsilon\Big),
  \end{equation*}
  where $(\tilde B'_x)_{x\in[L]\setminus\{x_0,y_0\}}$ is Maxwell-Boltzmann distributed with $t_0 L$ particles and $L-2$ sites.
  Using again exponential Chebyshev inequality and negative association of Maxwell-Boltzmann distribution we get for any $\lambda>0$
   \begin{equation*}
    \PP_\eta\Big(\frac{1}{L}\sum_{x=1}^L\ind_{\{\eta'_x(t_0)>0\}}>1-\epsilon\Big)
    \leq e^{-\lambda (1-3\epsilon) \lfloor L/2\rfloor}\Big(e^{\lambda} p'+(1-p')\Big)^{\lfloor L/2\rfloor-2}.
  \end{equation*}
  where $p'=1-(1-1/(L-2))^{t_0L}$.
  The rest of the proof is exactly the same of Lemma~\ref{lem:1} and \eqref{eq:1b} is proved.
  By inequalities \eqref{eq:b1}, \eqref{eq:3b}, \eqref{eq:2b} and \eqref{eq:1b} we get
  \begin{equation*}
    \PP(\tau\leq\bar t+1)
    \geq \Big(\frac{\epsilon_0'}{16}\Big)^{\bar t}(1-e^{-\epsilon_0' L}).
  \end{equation*}
  Taking $L$ large enough the result follows.   
\end{proof}

From the last two lemmata we can prove the next statement

\begin{pro}
  \label{cor:2}
  There is a positive constant $\beta$ such that for any $\chi=(\eta,x_0,y_0)\in\Omega\times[L]\times[L]$
  \begin{equation*}
    \PP_\chi(\tau\leq\beta\Vert\eta\Vert_\infty)\geq(1+\Vert\eta\Vert_\infty)^{-\beta}.
  \end{equation*}
\end{pro}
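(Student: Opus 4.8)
The plan is to chain Lemma~\ref{lem:2} and Lemma~\ref{lem:3} together by means of the Markov property of $(\chi(t))_{t\geq0}$. Lemma~\ref{lem:2} tells us that after the thermalization time $\bar t:=c_1\Vert\eta\Vert_\infty$ the two tagged particles sit, with probability at least $1/2$, on sites whose occupation number in the environment is at most $\bar a:=c_2\log(1+\Vert\eta\Vert_\infty)$; Lemma~\ref{lem:3} then guarantees that, started from such a ``light'' state, the two particles coalesce within $\bar a+\lfloor2r\rfloor+1$ further steps with probability at least $(1/c)^{\bar a+\lfloor2r\rfloor}$. Multiplying these two probabilities produces a lower bound of the form $\tfrac12(1/c)^{\bar a+\lfloor2r\rfloor}$ for $\PP_\chi\big(\tau\leq\bar t+\bar a+\lfloor2r\rfloor+1\big)$, and since $\bar a$ depends only logarithmically on $\Vert\eta\Vert_\infty$ while $\bar t$ is linear in it, this is exactly a bound of the advertised shape once constants are absorbed into $\beta$.

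In detail I would proceed as follows. We may and do assume $\Vert\eta\Vert_\infty\geq1$ and $L$ above the threshold of Lemma~\ref{lem:3}; the remaining finitely many pairs $(\eta,L)$ are handled by enlarging $\beta$, using that on a finite state space $\tau$ is \as finite, so $\PP_\chi(\tau\leq T)$ is bounded below by a positive constant for $T$ large. Set $\bar t:=c_1\Vert\eta\Vert_\infty$, $\bar a:=c_2\log(1+\Vert\eta\Vert_\infty)$, let $\tau'$ be the coalescing time of the time-shifted chain $(\chi(\bar t+s))_{s\geq0}$, and let $A:=\{\eta_{X(\bar t)}(\bar t)\vee\eta_{Y(\bar t)}(\bar t)\leq\bar a\}$, so that $\PP_\chi(A)\geq1/2$ by Lemma~\ref{lem:2}. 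Since the event $\{X(s)=Y(s)\}$ is increasing in $s$ we always have $\tau\leq\bar t+\tau'$; hence, conditioning on $\mathcal F_{\bar t}:=\sigma(\chi(0),\dots,\chi(\bar t))$ and using the Markov property at time $\bar t$,
\begin{equation*}
  \PP_\chi\big(\tau\leq\bar t+\bar a+\lfloor2r\rfloor+1\big)
  \geq\PP_\chi\big(A\cap\{\tau'\leq\bar a+\lfloor2r\rfloor+1\}\big)
  =\EE_\chi\Big[\ind_A\,\PP_{\chi(\bar t)}\big(\tau\leq\bar a+\lfloor2r\rfloor+1\big)\Big].
\end{equation*}
On $A$, writing $\chi(\bar t)=(\eta',x',y')$ with $\eta'_{x'}\vee\eta'_{y'}\leq\bar a$, the monotonicity of $u\mapsto(1/c)^u$ (we may assume $c\geq2$) together with Lemma~\ref{lem:3} gives
\begin{equation*}
  \PP_{\chi(\bar t)}\big(\tau\leq\bar a+\lfloor2r\rfloor+1\big)
  \geq\PP_{\chi(\bar t)}\big(\tau\leq\eta'_{x'}\vee\eta'_{y'}+\lfloor2r\rfloor+1\big)
  \geq(1/c)^{\eta'_{x'}\vee\eta'_{y'}+\lfloor2r\rfloor}
  \geq(1/c)^{\bar a+\lfloor2r\rfloor},
\end{equation*}
so that $\PP_\chi\big(\tau\leq\bar t+\bar a+\lfloor2r\rfloor+1\big)\geq\tfrac12(1/c)^{\bar a+\lfloor2r\rfloor}=\tfrac12\,c^{-\lfloor2r\rfloor}(1+\Vert\eta\Vert_\infty)^{-c_2\log c}$.

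It then only remains to do the bookkeeping on the constants. Using $\log(1+x)\leq x$ and $\Vert\eta\Vert_\infty\geq1$ one bounds $\bar t+\bar a+\lfloor2r\rfloor+1\leq(c_1+c_2+\lfloor2r\rfloor+1)\Vert\eta\Vert_\infty$, while $1+\Vert\eta\Vert_\infty\geq2$ lets one absorb the prefactor $\tfrac12c^{-\lfloor2r\rfloor}$ into $(1+\Vert\eta\Vert_\infty)^{-c'}$ for a suitable constant $c'$; choosing $\beta$ larger than both $c_1+c_2+\lfloor2r\rfloor+1$ and $c'+c_2\log c$ yields simultaneously $\{\tau\leq\bar t+\bar a+\lfloor2r\rfloor+1\}\subseteq\{\tau\leq\beta\Vert\eta\Vert_\infty\}$ and the lower bound $(1+\Vert\eta\Vert_\infty)^{-\beta}$, which is the claim. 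I do not expect a genuine obstacle: the real work has already been carried out in Lemmata~\ref{lem:2} and~\ref{lem:3}, and the only steps needing a line of justification are the deterministic domination $\tau\leq\bar t+\tau'$ and the reduction, via the Markov property on the $\mathcal F_{\bar t}$-measurable event $A$, to a fresh copy of the chain started from $\chi(\bar t)$.
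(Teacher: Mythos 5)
Your proposal is correct and follows essentially the same route as the paper: apply the Markov property at time $c_1\Vert\eta\Vert_\infty$, restrict to the event of Lemma~\ref{lem:2} (probability at least $1/2$), invoke Lemma~\ref{lem:3} from the state $\chi(\bar t)$ to get the factor $(1/c)^{\bar a+\lfloor 2r\rfloor}=c^{-\lfloor 2r\rfloor}(1+\Vert\eta\Vert_\infty)^{-c_2\log c}$, and absorb constants into $\beta$. The only (harmless) differences are cosmetic: the paper allows the extra time $\eta_{x_0}\vee\eta_{y_0}+\lfloor 2r\rfloor+1$ rather than your tighter $\bar a+\lfloor 2r\rfloor+1$, and you additionally spell out the treatment of small $L$, which the paper leaves implicit.
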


\begin{proof}
  Let $t:=c_1\Vert\eta\Vert_\infty$, $a:=c_2\log(1+\Vert\eta\Vert_\infty)$ and $h:=\eta_{x_0}\vee \eta_{y_0}+\lfloor 2r\rfloor+1$, with $c_1$ and $c_2$ as in Lemma~\ref{lem:2}.
  Then by the Markov property
  \begin{equation}
    \label{eq:co2:1}
    \begin{split}
          \PP_\chi(\tau\leq t+h)
    \geq \PP_\chi(\tau\leq t+h,\eta_{X(t)}\vee \eta_{Y(t)}\leq a)\\
    = \EE_\chi\big[\ind_{\{\eta_{X(t)}\vee \eta_{Y(t)}\leq a\}}\PP_{\chi(t)}(\tau\leq h)\big].
    \end{split}
  \end{equation}
  By Lemma~\ref{lem:3} we have that
  \begin{equation*}
    \begin{split}
      \ind_{\{\eta_{X(t)}\vee \eta_{Y(t)}\leq a\}}\PP_{\chi(t)}(\tau\leq h)
      &\geq\ind_{\{\eta_{X(t)}\vee \eta_{Y(t)}\leq a\}}\Big(\frac{1}{c}\Big)^{\eta_{X(t)}\vee \eta_{Y(t)}+\lfloor 2r\rfloor}\\
      &\geq\ind_{\{\eta_{X(t)}\vee \eta_{Y(t)}\leq a\}} \Big(\frac{1}{c}\Big)^{a+\lfloor 2r\rfloor}.
    \end{split}
  \end{equation*}
  By plugging this bound into \eqref{eq:co2:1} and using Lemma~\ref{lem:2} we get
  \begin{equation*}
      \PP_{\chi}(\tau\leq t+h)
      \geq \frac{1}{2}\Big(\frac{1}{c}\Big)^{a+\lfloor 2r\rfloor}
      = \frac{1}{2}\Big(\frac{1}{c}\Big)^{\lfloor 2r\rfloor}\frac{1}{(1+\Vert\eta\Vert_\infty)^{c_2\log c}}.
  \end{equation*}
  From this the result follows.
\end{proof}

We finally are in a position to prove Theorem~\ref{pro:2} 

\begin{proofof}{Theorem~\ref{pro:2}}
  Let $a>0$ to be chosen later.
  To bound the tail distribution of $\tau$ we consider first the case in which the tagged particles did not coalesce and $\Vert\eta(s)\Vert_\infty\leq a$ for any $s\leq t$.
  Then the case in which $\Vert\eta(s)\Vert_\infty> a$ for some $s\leq t$, see the first line of \eqref{eq:sp}.
  Finally we bound the probability of these events using Proposition~\ref{cor:2}.
  
  Let $\beta\geq1$ such that the statement of Proposition~\ref{cor:2} holds and let $t>0$ and $a\geq 1$ two parameters we will adjust later.
  Consider the decreasing sequence of events $E_1,E_2,\dots$ defined by
  \begin{equation*}
    t_k
    :=\lfloor t+2k a\beta\rfloor,
    \ k\in\integer_+,
    \qquad
    F_{h-1}:=\bigcap_{k=0}^{h-1}\big\{\Vert\eta(t_k)\Vert_\infty\leq a\big\},
    \qquad
    E_h
    :=\big\{\tau>t_h\big\}\cap F_{h-1}.
  \end{equation*}
  By the Markov property
  \begin{equation*}
    \begin{split}
          \PP_\chi(E_{h+1})
    &=\PP_\chi(E_{h+1}\cap \{\tau>t_{h}\})
    =\EE_\chi\big[\ind_{\{\tau>t_{h}\}\cap F_h}\PP_{\chi(t_h)}(\tau>t_{h+1}-t_h)\big]\\
   &\leq \EE_\chi\big[\ind_{\{\tau>t_{h}\}\cap F_h}\PP_{\chi(t_h)}(\tau>2a\beta-1)\big]
   \leq \EE_\chi\big[\ind_{\{\tau>t_{h}\}\cap F_h}\PP_{\chi(t_h)}(\tau>a\beta)\big].
    \end{split}
  \end{equation*}
  As $F_h$ implies $\Vert\eta(t_h)\Vert_\infty\leq a$, by Proposition~\ref{cor:2}, we have $\PP_{\chi(t_h)}(\tau>a\beta)\leq 1-(1+a)^{-\beta}$ so that
  \begin{equation*}
    \PP_\chi(E_{h+1})
    \leq \PP_\chi(F_{h}\cap \{\tau>t_{h}\})\big(1-(1+a)^{-\beta}\big) 
    \leq \PP_\chi(E_{h})\exp\big\{-(1+a)^{-\beta}\big\}.
  \end{equation*}
  Iterating we get 
  \begin{equation*}
    \PP_\chi(E_{h})
    \leq\exp\big\{-h(1+a)^{-\beta}\big\}
  \end{equation*}  
  for any $h\in\natural$.
  We then have
  \begin{equation}
    \label{eq:sp}
    \begin{split}
          \PP_\chi(\tau>t_h)
    &=\PP_\chi(\{\tau>t_h\}\cap F_h)+\PP_\chi(\{\tau>t_h\}\cap F_h^c)
    \leq\PP_\chi(E_h)+\PP_\chi(F_h^c)\\
    &\leq\exp\big\{-h(1+a)^{-\beta}\big\}+h\sup_{u\geq t}\PP_\chi(\Vert\eta(u)\Vert_\infty>a).
    \end{split}
  \end{equation}
  Observe that for any $u>0$
  \begin{equation}
    \label{eq:bk}
    \PP_\chi(\Vert\eta(u)\Vert_\infty>a)
    =\PP_\chi(\exists x\in[L]\colon \eta_x(u)>a)
    \leq L \sup_{x\in[L]}\PP_\chi(\eta_x(u)>a).
  \end{equation}
  Now we choose $\bar u:=\Vert\eta\Vert_\infty/\alpha$, where $\alpha$ is the positive constant appearing in \eqref{eq:Ce},
  so that for any $u\geq \bar u$ and $x\in[L]$ we have $(\eta_x-\alpha u)\wedge0=0$.
  By \eqref{eq:Ce}
  \begin{equation*}
    \PP_\chi(\eta_x(u)>a)
    \leq\PP_\chi(\eta_x(u)\geq a)
    \leq 2\kappa e^{-\theta a}.
  \end{equation*}
  Plugging this bound into \eqref{eq:bk} we have
  \begin{equation*}
    \PP_\chi(\Vert\eta(u)\Vert_\infty>a)
    \leq2\kappa Le^{-\theta a}
  \end{equation*}
  for any $u\geq \bar u$.
  By \eqref{eq:sp} we get
    \begin{equation*}
          \PP_\chi(\tau>t_h)
     \leq \exp\big\{-h(1+a)^{-\beta}\big\}+2\kappa hLe^{-\theta a}.
  \end{equation*}
  Taking $h=\lfloor(\log L)^{2+\beta}\rfloor+1$, $a:=(4/\theta)\log L$ and $L$ large enough to have $a\geq1$, we have that
      \begin{equation*}
        \begin{split}
                    \PP_\chi(\tau>t_h)
     \leq \exp\Big\{-\Big(\frac{\log L}{1+\frac{4}{\theta}\log L}\Big)^{\beta}(\log L)^2\Big\}+\frac{(\log L)^{2+\beta}}{L^3}\\
     \leq \exp\Big\{-\Big(\frac{\theta}{4}\Big)^\beta(\log L)^2\Big\}+\frac{(\log L)^{2+\beta}}{L^3}
     =o(L^{-2}).
        \end{split}
  \end{equation*}
\end{proofof}

\subsection*{Acknowledgements}
This work has been supported by the PRIN 20155PAWZB ``Large Scale Random Structures''.
We thank the anonymous referees for their improving comments and suggestions.

\end{document}